\documentclass[11pt,reqno]{amsart}
\usepackage{amsmath, amsthm, xcolor, graphicx, amssymb, mathtools, chngcntr, graphicx,  hyperref}
\usepackage{booktabs}
\usepackage{enumerate}
\usepackage{float}
\usepackage[tmargin=1in,bmargin=1in,rmargin=1in,lmargin=1in]{geometry}

\newcommand{\be}{\begin{equation}}
	\newcommand{\ee}{\end{equation}}
\newcommand{\ben}{\begin{eqnarray*}}
	\newcommand{\een}{\end{eqnarray*}}

\newtheorem{theorem}{Theorem}[section]
\newtheorem{lemma}[theorem]{Lemma}
\newtheorem{corollary}[theorem]{Corollary}
\newtheorem{proposition}[theorem]{Proposition}
\newtheorem{remark}[theorem]{Remark}

\newtheorem{definition}[theorem]{Definition}
\newtheorem{example}[theorem]{Example}

\newcommand{\bt}{\begin{theorem}}
	\newcommand{\et}{\end{theorem}}
\newcommand{\bl}{\begin{lemma}}
	\newcommand{\el}{\end{lemma}}
\newcommand{\bcl}{\begin{corollary}}
	\newcommand{\ecl}{\end{corollary}}
\newcommand{\bex}{\begin{example}}
	\newcommand{\eex}{\end{example}}
\newcommand{\brem}{\begin{remark}}
	\newcommand{\erem}{\end{remark}}
\newcommand{\bed}{\begin{definition}}
	\newcommand{\eed}{\end{definition}}

\numberwithin{equation}{section}
\numberwithin{theorem}{section}

\begin{document}
	\title{SNT-Rank: Kronecker Products and Euclidean Distance Matrices}

	\author{Bharat Pratap Chauhan}
	\address{(B. P. Chauhan) Department of Mathematics, Indian Institute of Technology Gandhinagar, Gujarat 382355, India}
	\email{bharat024pratap@gmail.com; bharat.chauhan@iitgn.ac.in; bc102@snu.edu.in}
	
	\author{Projesh Nath Choudhury}
	\address{(P. N. Choudhury) Department of Mathematics, Indian Institute of Technology Gandhinagar, Gujarat 382355, India}
	\email{projeshnc@iitgn.ac.in}
	
	\date{\today}
	\maketitle
	
	\let\thefootnote\relax
	\footnotetext{MSC2020: 15A23, 15B48.}
	\footnotetext{Keywords: Symmetric nonnegative matrices, Symmetric nonnegative rank, Nonnegative rank, Euclidean distance matrices, Kronecker product.}

\begin{abstract}
Symmetric nonnegative matrix trifactorizations (SN-Trifactorizations) were introduced by Bukov\v{s}ek--\v{S}migoc [\textit{Linear Algebra Appl.} 2023] as a symmetric analogue of nonnegative matrix factorizations. A SN-Trifactorization of a symmetric nonnegative matrix $A$ is of the form $A = BCB^{T},$ where $B$ and $C$ are nonnegative matrices, with $C$ symmetric. The associated SNT-rank of $A$ is defined as the smallest integer $k$ for which $A$ admits such a factorization with $C \in \mathbb{R}_{+}^{k \times k}$.

In this paper, we derive sharper upper bounds for the SNT-rank of the Euclidean distance matrices considered by Shitov [\textit{Linear Algebra Appl.} 2025] and Bukov\v{s}ek--\v{S}migoc [\textit{Linear Algebra Appl.} 2023]. We also establish several new relationships between the rank and the SNT-rank of  symmetric nonnegative matrices and show that the SNT-rank is submultiplicative with respect to the Kronecker product. Finally, motivated by a conjecture posed in the Dagstuhl Seminar Report 13082, we prove a multiplicativity result for the nonnegative rank under an additional structural assumption. We also partially resolve a conjecture of Vandaele--Gillis--Glineur--Tuyttens [\textit{J. Global Optim.} 2016].
\end{abstract}
    
\noindent 

\section{\textbf{Introduction and main results}}
{\em Throughout this paper, $\mathbb{R}_{+}^{m\times n}$  denotes the set of all \(m\times n\) nonnegative real matrices and  $\mathcal{S}_n^{+}:=\{A\in\mathbb{R}_{+}^{n\times n}:A=A^T\}$, denotes the set of all \(n\times n\) symmetric nonnegative matrices.}
\smallskip

Matrix factorizations involving nonnegative factors play an important role in matrix theory, optimization, combinatorics, statistics, machine learning, and data analysis. Among the most extensively studied examples are nonnegative matrix factorizations and completely positive factorizations, both of which possess rich structural properties and numerous applications.

Given a nonnegative matrix $A\in\mathbb{R}_{+}^{m\times n},$ the nonnegative matrix factorization (NM-Factorization) problem asks to find nonnegative matrices $U\in\mathbb{R}_{+}^{m\times k}$ and $V\in\mathbb{R}_{+}^{n\times k}$ such that $A\approx UV^{T},$ where the approximation is measured with respect to the Frobenius norm. The exact version of the problem,  one asks for the smallest integer \(k\) such that $A=UV^{T}.$
This minimum value of \(k\) is called the nonnegative rank of \(A\) and is denoted by $\operatorname{rk}_{+}(A),$ while $\operatorname{rk}(A)$ denotes the usual rank of $A.$
It is well known that
\[
\operatorname{rk}(A)
\le
\operatorname{rk}_{+}(A)
\le
\min\{m,n\}.
\]
Nonnegative matrix factorization has rich applications in several areas, including text mining, image processing, hyperspectral imaging, computational biology, and clustering. We refer the reader to \cite{ gillis2020siam, lee1999learning} and the references therein for more information.

For symmetric nonnegative matrices, it is natural to consider factorizations that preserve symmetry as well as nonnegativity. A fundamental example is the \emph{completely positive} (CP) factorization. A matrix $A\in\mathcal{S}_{n}^{+}$ is called \emph{completely positive} if
$
A=UU^{T}
$
for some nonnegative matrix $U\in\mathbb{R}_{+}^{n\times k}$. The smallest such integer $k$ is called the \emph{cp-rank} of $A$, denoted by $\operatorname{cp}(A)$; if no such factorization exists, we set $\operatorname{cp}(A)=\infty$. Completely positive matrices and their cp-ranks have been studied extensively in matrix theory and optimization \cite{berman2003completely}.

In this paper, we study the version of symmetric nonnegative matrix trifactorization, which was recently introduced by Bukov\v{s}ek--\v{S}migoc in \cite{bukovvsek2023symmetric}. More precisely, a symmetric nonnegative matrix trifactorization (SN-Trifactorization) of a matrix $A\in\mathcal{S}_{n}^{+}$
is a factorization of the form $A=BCB^{T},$
where $B\in\mathbb{R}_{+}^{n\times k}$ and $C\in\mathcal{S}_{k}^{+}.$ The smallest integer \(k\) for which such a factorization exists is called the SNT-rank of \(A\) and is denoted by $\operatorname{st}_{+}(A).$ For completeness, we set $\operatorname{st}_{+}(0)=0.$ SN-Trifactorization provides a framework for studying low-rank structure in symmetric nonnegative matrices while preserving both nonnegativity and symmetry 
 \cite{arora2013practicalalgorithm,fu2018anchor,ho2008thesis}.
 
 In \cite{bukovvsek2023symmetric}, Bukov\v{s}ek--\v{S}migoc  established several fundamental properties of the SNT-rank, including its relationships with the rank, nonnegative rank, and cp-rank. They later developed a graph-theoretic framework based on set-join covers \cite{bukovvsek2025pattern}, yielding exact formulas for the SNT-rank of several graph classes. Despite these advances, determining the SNT-rank of a symmetric nonnegative matrix remains difficult problem in general. Shitov \cite{shitov2025further} recently investigated the computational complexity of the SNT-rank for symmetric nonnegative matrices with integer entries.

Euclidean distance matrices have attracted considerable attention in the study of the SNT-rank and SN-Trifactorizations. Given ${x}_1,\ldots,x_n\in\mathbb{R}$,
the associated Euclidean distance matrix (EDM) is the $n \times n$ symmetric nonnegative matrix \begin{equation}\label{eqnedm}
A(x_1,\ldots,x_n):=\left(|x_i-x_j|^2\right)_{i,j=1}^n.
\end{equation}

Every EDM has rank three whenever $n\geq 3$ \cite{mmlin2010edm}.  Bukov\v{s}ek and \v{S}migoc \cite{bukovvsek2023symmetric} initiated the study of the SNT-rank of the special family $A(1,\ldots,n)$ and and established the bound \begin{equation}\label{helbnd1}\operatorname{st}_{+}\!\big(A(1,\ldots,n)\big)
 \leq
 \left\lceil \frac{n}{2}\right\rceil +2.\end{equation} Shitov \cite{shitov2025further} subsequently improved this bound to \[\operatorname{st}_{+}\!\big(A(1,\ldots,n)\big)
 \leq
 4\log_2 n +4.\] Our first main result further improves this upper bound.
 
 \begin{theorem}\label{theorem:SNT_rank_Naturalno}
 	$\operatorname{st}_{+}\!\big(A(1,\ldots,n)\big)
 	\leq
 	2\lceil \log_2 n\rceil.$
 \end{theorem}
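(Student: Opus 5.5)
The plan is to prove the bound for $n=2^m$ by induction on $m$, showing that doubling the size costs at most $2$ in SNT-rank. The general case then follows by monotonicity. Write $A_N:=A(1,\ldots,N)$. I will use two elementary facts about $\operatorname{st}_{+}$, each with a one-line proof.

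First, $\operatorname{st}_{+}$ is monotone on principal submatrices: if $A=BCB^T$, deleting rows of $B$ gives a factorization of any principal submatrix. Second, $\operatorname{st}_{+}$ is subadditive: if $A_1=B_1C_1B_1^T$ and $A_2=B_2C_2B_2^T$, then $A_1+A_2=[B_1\ B_2]\,(C_1\oplus C_2)\,[B_1\ B_2]^T$. Since $A_n$ is a principal submatrix of $A_{2^{\lceil\log_2 n\rceil}}$, it suffices to show $\operatorname{st}_{+}(A_{2^m})\le 2m$. The base case $m=0$ is $A_1=0$, with $\operatorname{st}_{+}=0$.

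For the inductive step, index $A_{2N}$ by the two halves $\{1,\ldots,N\}$ and $\{N+1,\ldots,2N\}$. Then
\[
A_{2N}=\begin{pmatrix} A_N & X\\ X^T & A_N\end{pmatrix},\qquad X_{ab}=(N+b-a)^2 .
\]
Let $P$ be the $N\times N$ reversal permutation. Since the points $1,\ldots,N$ are symmetric under $i\mapsto N+1-i$, we have $PA_NP=A_N$. Take an optimal factorization $A_N=BCB^T$ and set $B'=\begin{pmatrix}B\\ PB\end{pmatrix}$. Then $B'CB'^T=\begin{pmatrix} A_N & A_NP\\ PA_N & A_N\end{pmatrix}$, which has SNT-rank at most $\operatorname{st}_{+}(A_N)$.

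The remainder is $\begin{pmatrix}0 & X-A_NP\\ (X-A_NP)^T & 0\end{pmatrix}$. The key computation is the entrywise identity, with $(A_NP)_{ab}=(a+b-N-1)^2$:
\[
(X-A_NP)_{ab}=(N+b-a)^2-(a+b-N-1)^2=(2N+1-2a)(2b-1).
\]
This equals $f_ag_b$ with $f_a=2N+1-2a\ge 0$ (since $a\le N$) and $g_b=2b-1>0$. Hence the remainder equals
\[
\begin{pmatrix} f&0\\0&g\end{pmatrix}\begin{pmatrix}0&1\\1&0\end{pmatrix}\begin{pmatrix} f&0\\0&g\end{pmatrix}^T,
\]
so its SNT-rank is at most $2$. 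Subadditivity then gives $\operatorname{st}_{+}(A_{2N})\le \operatorname{st}_{+}(A_N)+2$, and the induction closes with $\operatorname{st}_{+}(A_{2^m})\le 2m$.

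The main obstacle is finding a decomposition in which the off-diagonal correction is simultaneously nonnegative and of nonnegative rank one. The naive choice of stacking $B$ twice leaves $X-A_N$, which has negative entries. The reflection trick, which places $PB$ in the second block, is what turns the correction into a product of a nonnegative vector and a positive vector. Everything else is routine verification.
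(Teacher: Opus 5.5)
Your proof is correct and is essentially the paper's argument. The paper reverses the order of the second half of the indices and writes the result as $\begin{pmatrix}1&1\\1&1\end{pmatrix}\otimes A(1,\ldots,N)+\begin{pmatrix}0&1\\1&0\end{pmatrix}\otimes ff^{T}$ with $f_a=2N+1-2a$; this is exactly your decomposition conjugated by $I\oplus P$ (since $Pg=f$), after which it applies Kronecker submultiplicativity and subadditivity, where you build the factorization directly, and the recurrence $\operatorname{st}_{+}(A_{2N})\le\operatorname{st}_{+}(A_N)+2$, the base case $A_1=0$ and the reduction to powers of two via principal submatrices are the same.
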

We also extend the above result to a broader class of  Euclidean distance matrices. A key ingredient in the proof is the behavior of the SNT-rank under the Kronecker product. Recall that the Kronecker product of two matrices $A=(a_{ij})\in\mathbb{R}^{m_1\times n_1}$ and $B\in\mathbb{R}^{m_2\times n_2}$ is defined by
\[
A\otimes B
:=
\begin{pmatrix}
	a_{11}B & \cdots & a_{1n_1}B\\
	\vdots & \ddots & \vdots\\
	a_{m_1 1}B & \cdots & a_{m_1n_1}B
\end{pmatrix}
\in
\mathbb{R}^{m_1m_2\times n_1n_2}.
\]
Our next main result establishes that the SNT-rank is submultiplicative with respect to the Kronecker product.

\begin{theorem}\label{lemma:kroneckerproduct}
	Let $A_1\in\mathcal{S}_{n_1}^{+}$ and $A_2\in\mathcal{S}_{n_2}^{+}$. Then
	\[
	\operatorname{rk}(A_1)\operatorname{rk}(A_2)
	\le
	\operatorname{st}_{+}(A_1\otimes A_2)
	\le
	\operatorname{st}_{+}(A_1)\operatorname{st}_{+}(A_2).
	\]
	Moreover, if $n_1,n_2\le3$, then
	\[
	\operatorname{st}_{+}(A_1\otimes A_2)
	=
	\operatorname{st}_{+}(A_1)\operatorname{st}_{+}(A_2).
	\]
\end{theorem}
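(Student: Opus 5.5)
The plan has three parts: the two general inequalities, which are nearly immediate, and the equality case for $n_1,n_2\le 3$, which needs a case analysis.

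\textbf{Upper bound.} Take optimal factorizations $A_1=B_1C_1B_1^T$ and $A_2=B_2C_2B_2^T$ with $C_i\in\mathcal{S}_{k_i}^+$ and $k_i=\operatorname{st}_+(A_i)$. The mixed-product property gives
\[
A_1\otimes A_2=(B_1\otimes B_2)(C_1\otimes C_2)(B_1\otimes B_2)^T .
\]
Here $B_1\otimes B_2$ is nonnegative, and $C_1\otimes C_2$ is symmetric nonnegative of size $k_1k_2$. Hence $\operatorname{st}_+(A_1\otimes A_2)\le k_1k_2$. If either $A_i=0$, both sides are $0$ by convention.

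\textbf{Lower bound.} I would use the general fact that $\operatorname{rk}(A)\le\operatorname{st}_+(A)$: a factorization $A=BCB^T$ with $C$ of size $k$ forces $\operatorname{rk}(A)\le k$. Combining this with $\operatorname{rk}(A_1\otimes A_2)=\operatorname{rk}(A_1)\operatorname{rk}(A_2)$ gives the lower bound.

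\textbf{Equality when $n_1,n_2\le 3$.} By the two inequalities, it suffices to show that the relevant factors satisfy $\operatorname{st}_+=\operatorname{rk}$, or to handle the remaining cases directly. I would first recall, or prove, a classification of small symmetric nonnegative matrices from Bukov\v{s}ek--\v{S}migoc.
\begin{itemize}
\item For $n\le 2$, one has $\operatorname{st}_+(A)=\operatorname{rk}(A)$. A rank-one symmetric nonnegative matrix is $vv^T$ with $v\ge 0$, and for $n=2$ one can take $B=I$.
\item For $n=3$, again $\operatorname{st}_+(A)=\operatorname{rk}(A)$ whenever $\operatorname{rk}(A)\neq 2$. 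The rank-3 case is trivial with $B=I$, and the rank-one case is as above.
\item The delicate case is $n=3$ with rank $2$ and $\operatorname{st}_+(A)=3$. An example is the matrix with zero diagonal and all off-diagonal entries $1$ (which has rank 3), or $A(0,1,2)$-type patterns; the genuine rank-2 examples are those with a zero pattern that prevents a $2\times 2$ middle factor.
\end{itemize}

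If both factors satisfy $\operatorname{st}_+=\operatorname{rk}$, equality follows at once from the two inequalities. The remaining cases have some $A_i$ of size $3$ with rank $2$ and $\operatorname{st}_+(A_i)=3$. There I need a direct lower bound for $\operatorname{st}_+(A_1\otimes A_2)$ exceeding the rank product.

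\textbf{Main obstacle: the rank-2, $\operatorname{st}_+=3$ case.} I would first obtain a combinatorial characterization of when a $3\times 3$ rank-2 matrix has SNT-rank $3$. I expect this to be a condition on a zero diagonal entry and the support, for instance $a_{ii}=0$ together with positivity forcing. Then I would lift the obstruction to the Kronecker product: restrict to principal submatrices of $A_1\otimes A_2$ (SNT-rank is monotone under principal submatrices), combined with a support/zero-pattern argument. In a factorization $BCB^T$, a zero entry $(A)_{ii}=0$ forces $C$ to vanish on the product of the supports of row $i$ of $B$ with itself. This is the step I expect to be the main obstacle.

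Concretely, suppose $A_1$ is the problematic factor and $A_2$ has rank $r_2$. I would choose $r_2$ rows and columns of $A_2$ giving a nonsingular principal block $P$, or, when no nonsingular principal block exists, handle that case separately. I would then show that any factorization of $A_1\otimes P$ of size $3r_2-1$ leads, via the zero pattern of $A_1\otimes P$, to a contradiction with the argument proving $\operatorname{st}_+(A_1)=3$. This likely means tracking which columns of $B$ are supported on each block $\{i\}\times[n_2]$. The case where both factors are problematic ($3\times3$, rank 2) would need the same argument giving $9$, checked by the same support bookkeeping.
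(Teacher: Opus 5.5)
Your upper and lower bounds are the paper's argument: the mixed-product factorization, and $\operatorname{rk}\le\operatorname{st}_+$ combined with multiplicativity of rank.

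The gap is in the equality case for $n_1,n_2\le 3$. You leave open the case of a $3\times 3$ factor with rank $2$ and $\operatorname{st}_+=3$, and sketch an unfinished zero-pattern argument for it. That case is empty. Every $A\in\mathcal{S}_n^+$ of rank $2$ has $\operatorname{st}_+(A)=2$ (Bukov\v{s}ek--\v{S}migoc, recorded here as Proposition~\ref{prel:proposition2.3}). Hence $\operatorname{st}_+(A)=\operatorname{rk}(A)$ for every $A\in\mathcal{S}_n^+$ with $n\le 3$ (Corollary~\ref{cor:rk=snt<=3}), and the equality follows directly from your own sandwich:
\[
\operatorname{rk}(A_1)\operatorname{rk}(A_2)\le\operatorname{st}_+(A_1\otimes A_2)\le\operatorname{st}_+(A_1)\operatorname{st}_+(A_2)=\operatorname{rk}(A_1)\operatorname{rk}(A_2).
\]
Your candidate examples do not witness the supposed obstruction. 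The zero-diagonal all-ones matrix and $A(0,1,2)$ both have rank $3$ (determinants $2$ and $8$), so for them $\operatorname{st}_+=\operatorname{rk}=3$. More generally, no zero pattern can block a $2\times 2$ middle factor for a rank-$2$ matrix.

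The missing fact has a short proof you could include.
\begin{enumerate}
\item Let $\operatorname{rk}(A)=2$ with column space $V$. The columns of $A$ lie in the $2$-dimensional pointed cone $V\cap\mathbb{R}^n_+$, which is generated by two extreme rays $u,w\ge 0$.
\item Extremality of $u$ means $u-\varepsilon w$ has a negative entry for every $\varepsilon>0$. This gives an index $i$ with $u_i=0<w_i$. Symmetrically, there is an index $j$ with $w_j=0<u_j$.
\item Thus $U=[u\ w]$ has the nonnegative left inverse $L=\operatorname{diag}(1/u_j,1/w_i)E$, where $E$ picks out rows $j$ and $i$.
\item Since $A=ULA$ and $A=A^T$, we get $A=U(LAL^T)U^T$ with $LAL^T\in\mathcal{S}_2^+$, so $\operatorname{st}_+(A)\le 2$.
\end{enumerate}
With this fact in hand, the rest of your plan for the equality case is unnecessary.
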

The preceding result provides a useful method for constructing symmetric nonnegative matrices of SNT-rank at most 9.

The multiplicative properties of the nonnegative rank with respect to the Kronecker product have been studied extensively. In the Dagstuhl Seminar Report \cite{Dagstuhl report 13082}, it was conjectured that, for any two nonnegative matrices $A_1$ and $A_2$,
\begin{equation}\label{eq1}
	\operatorname{rk}_{+}(A_1\otimes A_2)
	=
	\operatorname{rk}_{+}(A_1)\operatorname{rk}_{+}(A_2).
\end{equation}
Vandaele--Gillis--Glineur--Tuyttens  \cite{vandaele2016heuristics} later disproved this conjecture by constructing matrices for which \eqref{eq1} fails. The following result establishes the multiplicativity identity \eqref{eq1} under an additional assumption on the matrices $A_1$ and $A_2$.

\begin{theorem}\label{theorem:kroneckersntequality_nonnegativerank}
	Let $A_1 \in \mathbb{R}_{+}^{m_1 \times n_1}$ and $A_2 \in \mathbb{R}_{+}^{m_2 \times n_2}$ such that at least one of them has rank $1$. Then
	\[
	\operatorname{rk}_{+}(A_1 \otimes A_2)
	=
	\operatorname{rk}_{+}(A_1)\operatorname{rk}_{+}(A_2).
	\]
\end{theorem}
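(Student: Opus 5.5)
Without loss of generality let $A_2$ have rank $1$; if $A_2=0$ both sides vanish, so assume $A_2 = uv^{T}$ with $u\in\mathbb{R}_{+}^{m_2}$, $v\in\mathbb{R}_{+}^{n_2}$ both nonzero. (A nonzero rank-one nonnegative matrix can always be written this way, with nonnegative factors, by taking a nonzero row and a nonzero column.) Then $\operatorname{rk}_{+}(A_2)=1$, and the claim reduces to showing
\[
\operatorname{rk}_{+}(A_1\otimes uv^{T})=\operatorname{rk}_{+}(A_1).
\]
The case where $A_1$ has rank $1$ is symmetric, since $A_1\otimes A_2$ and $A_2\otimes A_1$ are permutation similar ($P(A_1\otimes A_2)Q=A_2\otimes A_1$ for suitable permutation matrices $P,Q$), and permuting rows and columns does not change the nonnegative rank.

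For the upper bound, take an optimal factorization $A_1=UV^{T}$ with $U\in\mathbb{R}_{+}^{m_1\times k}$, $V\in\mathbb{R}_{+}^{n_1\times k}$, $k=\operatorname{rk}_{+}(A_1)$. The mixed-product rule gives
\[
A_1\otimes uv^{T}=(U\otimes u)(V\otimes v)^{T},
\]
and the factors $U\otimes u$ and $V\otimes v$ are nonnegative with $k\cdot 1=k$ columns. Hence $\operatorname{rk}_{+}(A_1\otimes A_2)\le k$. More generally, the same argument gives $\operatorname{rk}_{+}(A_1\otimes A_2)\le \operatorname{rk}_{+}(A_1)\operatorname{rk}_{+}(A_2)$ for all nonnegative $A_1,A_2$.

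For the lower bound, observe that $A_1$ appears as a positive scalar multiple of a submatrix of $A_1\otimes uv^{T}$. Choose indices $p$ and $q$ with $u_p>0$ and $v_q>0$. The entries of $A_1\otimes A_2$ in rows $(i,p)$ and columns $(j,q)$, for $i\le m_1$ and $j\le n_1$, are $(A_1)_{ij}u_pv_q$. This submatrix is therefore $u_pv_q A_1$. Nonnegative rank is monotone under passing to submatrices: restricting the rows of the left factor and the rows of the right factor of a nonnegative factorization gives a nonnegative factorization of the submatrix with the same inner dimension. It is also invariant under multiplication by a positive scalar. Therefore $\operatorname{rk}_{+}(A_1)\le\operatorname{rk}_{+}(A_1\otimes A_2)$, and combining the two bounds finishes the proof.

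There is no real obstacle. The only points needing care are the zero-matrix case and checking that the indexing of the Kronecker product really produces the submatrix $u_pv_qA_1$. The essential mechanism is that a rank-one nonnegative matrix has a nonnegative rank-one factorization, so it contributes a factor of exactly $1$ on both sides.
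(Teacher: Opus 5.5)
Your proof is correct and follows essentially the same route as the paper. Both arguments use the standard Kronecker factorization to get $\operatorname{rk}_{+}(A_1\otimes A_2)\le \operatorname{rk}_{+}(A_1)\operatorname{rk}_{+}(A_2)$, then use the rank-one factor having nonnegative rank $1$ together with a positive multiple of the other factor appearing as a submatrix to get the reverse inequality (the paper takes the rank-one factor first and extracts a contiguous block, whereas you take it second and extract a scattered submatrix, which is only cosmetic).
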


We now turn our attention to a weaker question. The failure of multiplicativity of the nonnegative rank with respect to the Kronecker product for general nonnegative matrices led Vandaele--Gillis--Glineur--Tuyttens  \cite{vandaele2016heuristics} to propose the following conjecture:
\[
\operatorname{rk}_{+}(A\otimes A)
\ge
\operatorname{rk}_{+}(A) \operatorname{rk}(A) \hbox{ for all } A \in \mathbb{R}_{+}^{m \times n}.
\]
In our final main result, we settle this conjecture for all nonnegative matrices $A \in \mathbb{R}_{+}^{m \times n}$ with either $m\leq 3$ or $n\leq 3$. In fact, we establish the following stronger result.

\begin{theorem}\label{theorem:kroneckersntequality_nonnegativerank2}
Let $A_1 \in \mathbb{R}_{+}^{m_1 \times n_1}$ and $A_2 \in \mathbb{R}_{+}^{m_2 \times n_2}$. Suppose that
either $m_i \le 3$ or $n_i \le 3$ for each $i=1,2$. Then
\[
\operatorname{rk}_{+}(A_1 \otimes A_2)
=
\operatorname{rk}_{+}(A_1)\operatorname{rk}_{+}(A_2).
\]
\end{theorem}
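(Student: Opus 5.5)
We will prove the two inequalities separately. The upper bound $\operatorname{rk}_{+}(A_1\otimes A_2)\le \operatorname{rk}_{+}(A_1)\operatorname{rk}_{+}(A_2)$ holds for all nonnegative matrices. If $A_i=U_iV_i^T$ are nonnegative factorizations with inner dimensions $r_i=\operatorname{rk}_{+}(A_i)$, then the mixed-product property gives
\[
A_1\otimes A_2=(U_1\otimes U_2)(V_1\otimes V_2)^T,
\]
which is a nonnegative factorization with inner dimension $r_1r_2$.

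For the lower bound, we use the classical fact that a nonnegative matrix with at most three rows or at most three columns has $\operatorname{rk}_{+}=\operatorname{rk}$. This holds because every rank-$\le 2$ nonnegative matrix has nonnegative rank equal to its rank, and a nonnegative matrix of rank $3$ with only three rows or columns trivially has nonnegative rank at most $\min\{m,n\}=3$. Under the hypothesis, each $A_i$ has a dimension at most $3$, so $\operatorname{rk}_{+}(A_i)=\operatorname{rk}(A_i)$ for $i=1,2$. Combining $\operatorname{rk}_{+}\ge \operatorname{rk}$ with multiplicativity of ordinary rank under the Kronecker product,
\[
\operatorname{rk}_{+}(A_1\otimes A_2)\ge \operatorname{rk}(A_1\otimes A_2)=\operatorname{rk}(A_1)\operatorname{rk}(A_2)=\operatorname{rk}_{+}(A_1)\operatorname{rk}_{+}(A_2).
\]
Together with the upper bound, this gives the equality.

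The only point needing care is the claim $\operatorname{rk}_{+}(A)=\operatorname{rk}(A)$ when $\min\{m,n\}\le 3$. We verify it by cases on $r=\operatorname{rk}(A)$.
\begin{itemize}
\item If $r=\min\{m,n\}$, it follows from $\operatorname{rk}\le\operatorname{rk}_{+}\le\min\{m,n\}$.
\item If $r=0$ or $r=1$, it is immediate: a rank-one nonnegative matrix can be written as $uv^T$ with $u,v\ge0$ after fixing signs.
\item The remaining case is $r=2$ with $\min\{m,n\}=3$. Here we invoke the known result (Cohen--Rothblum) that every nonnegative matrix of rank $2$ has nonnegative rank $2$. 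Its geometric reason is that the columns, normalized to sum to one, lie on a segment inside the simplex, and the segment's two endpoints are nonnegative columns that generate all of them.
\end{itemize}
We expect this rank-$2$ case to be the main (though standard) obstacle. Everything else is formal, and the argument needs no symmetric structure or SNT-rank machinery.
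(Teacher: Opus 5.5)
Your proposal is correct and follows the paper's proof. The upper bound comes from the Kronecker factorization $(U_1\otimes U_2)(V_1\otimes V_2)^T$. The lower bound comes from $\operatorname{rk}_{+}(A_i)=\operatorname{rk}(A_i)$ (Cohen--Rothblum) together with $\operatorname{rk}(A_1\otimes A_2)=\operatorname{rk}(A_1)\operatorname{rk}(A_2)\le \operatorname{rk}_{+}(A_1\otimes A_2)$. The only difference is that you unpack the cited lemma by reducing it to the rank-$2$ case, whereas the paper simply cites it.
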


We conclude this introduction by outlining the organization of the paper. In Section~2, we recall the necessary preliminaries and establish several basic properties of the SNT-rank of symmetric nonnegative matrices. Section~3 is devoted to studying the behavior of the SNT-rank under the Kronecker product, culminating in the proofs of Theorems~\ref{lemma:kroneckerproduct}, \ref{theorem:kroneckersntequality_nonnegativerank}, and \ref{theorem:kroneckersntequality_nonnegativerank2}. In the final section, we investigate the SNT-rank of Euclidean distance matrices and prove Theorem~\ref{theorem:SNT_rank_Naturalno}.

\section{\textbf{Basic properties of SNT-rank}}
In this section, we develop several fundamental properties of the SNT-rank that play a key role in the proofs of our main results. We begin by recalling some basic definitions and preliminary results that will be used throughout the paper. The following preliminary result describes the relationship among the classical rank, the SNT-rank, the nonnegative rank, and the CP-rank of a symmetric nonnegative matrix.

\begin{proposition}\cite[Proposition~2.1]{bukovvsek2023symmetric}
	\label{prel:proposition2.1}
	Let $A\in\mathcal{S}_n^{+}.$
	Then:
	\begin{enumerate}[(a)]
		\item
		$\operatorname{rk}(A)
		\le
		\operatorname{st}_{+}(A)
		\le n.$
		
		\item
		$\operatorname{rk}_{+}(A)
		\le
		\operatorname{st}_{+}(A)
		\le
		\operatorname{cp}(A).$
	\end{enumerate}
\end{proposition}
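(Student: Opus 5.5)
The two parts of Proposition~\ref{prel:proposition2.1} can be treated separately; each inequality comes from a direct comparison of factorizations.

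For (a), the upper bound $\operatorname{st}_{+}(A)\le n$ comes from the trivial SN-Trifactorization $A = I_n A I_n^{T}$. Here $B=I_n\in\mathbb{R}_{+}^{n\times n}$, and $C=A\in\mathcal{S}_n^{+}$ is symmetric nonnegative, so $k=n$ is admissible. For the lower bound, I would take an optimal factorization $A=BCB^{T}$ with $B\in\mathbb{R}_{+}^{n\times k}$, $k=\operatorname{st}_{+}(A)$, and use subadditivity of rank under products: $\operatorname{rk}(A)\le\operatorname{rk}(B)\le k$. The case $A=0$ is covered by the convention $\operatorname{st}_{+}(0)=0$.

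For (b), the lower bound $\operatorname{rk}_{+}(A)\le\operatorname{st}_{+}(A)$ again starts from an optimal $A=BCB^{T}$. Setting $U=B\in\mathbb{R}_{+}^{n\times k}$ and $V=BC^{T}=BC\in\mathbb{R}_{+}^{n\times k}$ gives
\[
UV^{T}=B\,(BC)^{T}=BC^{T}B^{T}=BCB^{T}=A .
\]
Both $U$ and $V$ are nonnegative, since products of nonnegative matrices are nonnegative. This is a nonnegative factorization with inner dimension $k$, so $\operatorname{rk}_{+}(A)\le k$. For the upper bound $\operatorname{st}_{+}(A)\le\operatorname{cp}(A)$, the case $\operatorname{cp}(A)=\infty$ is vacuous. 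Otherwise $A=UU^{T}$ with $U\in\mathbb{R}_{+}^{n\times k}$ and $k=\operatorname{cp}(A)$, and then $A=U I_k U^{T}$ is an SN-Trifactorization with $C=I_k\in\mathcal{S}_k^{+}$.

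There is no genuine obstacle here. The only points needing care are the degenerate ones: the convention for the zero matrix, where all ranks vanish, and the convention $\operatorname{cp}(A)=\infty$ when $A$ is not completely positive.
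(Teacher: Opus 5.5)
Your proof is correct and is the standard argument: the paper only cites Bukov\v{s}ek--\v{S}migoc for this result, and it rests on the same four factorizations $A=I_nAI_n^{T}$, $\operatorname{rk}(BCB^{T})\le\operatorname{rk}(B)\le k$, $A=B\,(CB^{T})$, and $A=UI_kU^{T}$. One small wording fix: the rank fact you use is $\operatorname{rk}(XY)\le\min\{\operatorname{rk}(X),\operatorname{rk}(Y)\}$, not ``subadditivity'' of rank.
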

The following two results  provides some interesting properties of SNT-rank.

\begin{proposition}\cite[Proposition~2.2]{bukovvsek2023symmetric}
	\label{prel:proposition2.2}
	Let $A,A'\in\mathcal{S}_n^{+},$ $A''\in\mathcal{S}_m^{+},$
	and let \(A_0\) be a principal submatrix of \(A\). Then:
	\begin{enumerate}[(a)]
		\item
		$\operatorname{st}_{+}(A+A')
		\le
		\operatorname{st}_{+}(A)
		+
		\operatorname{st}_{+}(A').$
		
		\item
		$\operatorname{st}_{+}(A_0)
		\le
		\operatorname{st}_{+}(A).$
		
		\item
		$
		\operatorname{st}_{+}(A'\oplus A'')
		=
		\operatorname{st}_{+}(A')
		+
		\operatorname{st}_{+}(A'').
		$
	\end{enumerate}
\end{proposition}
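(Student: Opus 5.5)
We prove the three parts of Proposition~\ref{prel:proposition2.2} separately, each by writing down an explicit SN-Trifactorization.

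\textbf{Part (a).} Take optimal factorizations $A=BCB^{T}$ with $B\in\mathbb{R}_{+}^{n\times k}$, $C\in\mathcal{S}_k^{+}$, and $A'=B'C'B'^{T}$ with $B'\in\mathbb{R}_{+}^{n\times k'}$, $C'\in\mathcal{S}_{k'}^{+}$, where $k=\operatorname{st}_{+}(A)$ and $k'=\operatorname{st}_{+}(A')$. Form the block matrices
\[
\widehat B=\begin{pmatrix} B & B'\end{pmatrix}\in\mathbb{R}_{+}^{n\times(k+k')},\qquad \widehat C=C\oplus C'\in\mathcal{S}_{k+k'}^{+}.
\]
Block multiplication gives $\widehat B\widehat C\widehat B^{T}=BCB^{T}+B'C'B'^{T}=A+A'$. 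This is an SN-Trifactorization of size $k+k'$, which proves (a). The degenerate cases where $A$ or $A'$ is zero are handled by the convention $\operatorname{st}_{+}(0)=0$: simply drop the corresponding block.

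\textbf{Part (b).} Suppose $A_0$ is the principal submatrix of $A$ indexed by a set $S$. Given $A=BCB^{T}$ with $k=\operatorname{st}_{+}(A)$, let $B_S$ denote the rows of $B$ indexed by $S$. Then $A_0=B_SCB_S^{T}$ with $B_S\ge 0$, so $\operatorname{st}_{+}(A_0)\le k$.

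\textbf{Part (c).} The inequality $\le$ comes from the block-diagonal construction $B=B_1\oplus B_2$, $C=C_1\oplus C_2$, applied to optimal factorizations of $A'$ and $A''$.

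The reverse inequality $\ge$ is the main obstacle. Let $A'\oplus A''=BCB^{T}$ with $B\in\mathbb{R}_{+}^{(n+m)\times k}$, and split the rows of $B$ as $\begin{pmatrix}B_1\\ B_2\end{pmatrix}$. Define
\[
I_1=\{j: \text{column } j \text{ of } B_1 \text{ is nonzero}\},\qquad I_2=\{j: \text{column } j \text{ of } B_2 \text{ is nonzero}\}.
\]
The off-diagonal block gives $0=B_1CB_2^{T}=\sum_{p,q}c_{pq}\,b^{(1)}_p (b^{(2)}_q)^{T}$. Every term in this sum is nonnegative, so each term vanishes. Hence $c_{pq}=0$ whenever $p\in I_1$ and $q\in I_2$.

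Let $J=I_1\cap I_2$. For $p\in J$ we have $c_{pp}=0$, and $c_{pq}=0$ for all $q\in I_1\cup I_2$. Columns outside $I_1\cup I_2$ are zero in $B$, so we may delete them. Therefore
\[
A'=\sum_{p,q\in I_1} c_{pq}\,b^{(1)}_p(b^{(1)}_q)^{T}.
\]
The terms with $p\in J$ or $q\in J$ vanish, because such terms have the second index in $I_1\subseteq I_1\cup I_2$. Thus $A'$ is factored using only the columns in $I_1\setminus J$, and similarly $A''$ using only the columns in $I_2\setminus J$. These two index sets are disjoint subsets of $\{1,\dots,k\}$. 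Restricting $B_1$, $C$ and $B_2$, $C$ to them gives
\[
\operatorname{st}_{+}(A')+\operatorname{st}_{+}(A'')\le |I_1\setminus J|+|I_2\setminus J|\le k.
\]
The care needed here is in tracking that the shared columns $J$ contribute nothing; after that, the rest is immediate.
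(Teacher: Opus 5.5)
Your proof is correct. The paper gives no argument of its own to compare it with: it quotes this proposition from Bukov\v{s}ek--\v{S}migoc \cite{bukovvsek2023symmetric} without proof.

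Parts (a) and (b) are the standard column-concatenation and row-restriction constructions, and both are fine. For the nontrivial inequality $\operatorname{st}_{+}(A'\oplus A'')\ge \operatorname{st}_{+}(A')+\operatorname{st}_{+}(A'')$ in (c), your argument is complete and self-contained. You read off $c_{pq}=0$ for $p\in I_1$, $q\in I_2$ from the termwise vanishing of $B_1CB_2^{T}=0$. You then discard the shared columns $J=I_1\cap I_2$ and restrict to the disjoint index sets $I_1\setminus J$ and $I_2\setminus J$.

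One justification should be tightened. The claim that $c_{pq}=0$ for $p\in J$ and $q\in I_1$ is not immediate from the off-diagonal block as you wrote it; it needs the symmetry of $C$. Since $q\in I_1$ and $p\in I_2$, you get $c_{qp}=0$, and hence $c_{pq}=c_{qp}=0$. Accordingly, the phrase ``such terms have the second index in $I_1\subseteq I_1\cup I_2$'' should be replaced by the two actual cases:
\begin{itemize}
\item if $p\in J$, then $c_{pq}=c_{qp}=0$ for every $q\in I_1$ by symmetry;
\item if $q\in J\subseteq I_2$, then $c_{pq}=0$ for every $p\in I_1$ directly.
\end{itemize}
With that stated explicitly, the restricted factorizations $A'=B_1[:,I_1\setminus J]\,C[I_1\setminus J]\,B_1[:,I_1\setminus J]^{T}$ and its analogue for $A''$ give the bound. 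The degenerate cases ($A'$ or $A''$ zero, or empty index sets) are handled consistently by the convention $\operatorname{st}_{+}(0)=0$.
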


\begin{proposition}\cite[Corollary~2.1]{bukovvsek2023symmetric}
	\label{prel:proposition2.3}
	Let $A\in\mathcal{S}_n^{+}$
be a matrix of rank \(2\). Then $\operatorname{st}_{+}(A)=2.$
\end{proposition}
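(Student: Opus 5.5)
The lower bound is immediate: by Proposition~\ref{prel:proposition2.1}(a), $\operatorname{st}_{+}(A)\ge \operatorname{rk}(A)=2$. The work is the upper bound $\operatorname{st}_{+}(A)\le 2$. For it I would construct $B\in\mathbb{R}_+^{n\times 2}$ and take $C$ to be a $2\times 2$ principal submatrix of $A$, using the geometry of the cone generated by the columns of $A$.

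Let $a_1,\dots,a_n$ be the columns of $A$ and let $\mathcal{K}=\operatorname{cone}\{a_1,\dots,a_n\}$. Then $\mathcal{K}$ lies in the $2$-dimensional column space $V$ of $A$ and in $\mathbb{R}^n_+$. Since $\mathcal{K}\subseteq\mathbb{R}^n_+$, it contains no line, so it is pointed. Since $\operatorname{rk}(A)=2$, it spans $V$. A pointed, finitely generated, $2$-dimensional cone is generated by exactly two extreme rays. Each extreme ray of a finitely generated cone contains one of its generators, so there are indices $p\ne q$ with $\mathcal{K}=\operatorname{cone}\{a_p,a_q\}$, and $a_p,a_q$ are linearly independent.

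With these indices, every column can be written as $a_j=\alpha_j a_p+\beta_j a_q$ with $\alpha_j,\beta_j\ge 0$. Let $B\in\mathbb{R}_+^{n\times 2}$ have $j$-th row $(\alpha_j,\beta_j)$, and let $W=[a_p\ a_q]$. Then $A=WB^T$, and symmetry gives $A=A^T=BW^T$. Restricting the columns of this last identity to the indices $p,q$ gives
\[
W=A[:,\{p,q\}]=B\,W^T[:,\{p,q\}]=BC,\qquad C:=A[\{p,q\},\{p,q\}].
\]
Here $C$ is a symmetric nonnegative $2\times2$ matrix. Substituting into $A=WB^T$ yields $A=BCB^T$, so $\operatorname{st}_{+}(A)\le 2$.

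The main obstacle is the cone-geometric step: justifying that $\mathcal{K}$ is generated by two of the columns of $A$. This is where nonnegativity of $A$ (which makes $\mathcal{K}$ pointed) and the rank-$2$ hypothesis (which makes it $2$-dimensional rather than a single ray) are used. Concretely, I would identify $V$ with $\mathbb{R}^2$ via a basis and order the nonzero columns by angle. Pointedness confines them to an arc of length less than $\pi$, so the two angular extremes generate all of them with nonnegative coefficients. Zero columns are handled trivially by setting $\alpha_j=\beta_j=0$. The remaining algebraic step is routine bookkeeping.
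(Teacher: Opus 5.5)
Your proof is correct and complete. The paper gives no proof of this statement: it quotes it as \cite[Corollary~2.1]{bukovvsek2023symmetric} and uses it as a black box, so there is no in-paper argument to compare yours against.

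Your argument is self-contained and every step holds. Nonnegativity makes the column cone $\mathcal{K}$ pointed. The cleanest justification of your ``arc of length less than $\pi$'' is that $\mathbf{1}^{T}x>0$ for every nonzero $x\in\mathbb{R}^n_+$, so all nonzero columns lie in an open half-plane of the column space. The rank-$2$ hypothesis then makes $\mathcal{K}$ a two-dimensional cone whose two extreme rays are spanned by columns $a_p,a_q$. Symmetry converts $A=[a_p\ a_q]B^{T}$ into $A=B\,A[\{p,q\},\{p,q\}]\,B^{T}$.

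The index bookkeeping checks out. $W^{T}[:,\{p,q\}]$ has entries $A_{pp},A_{qp},A_{pq},A_{qq}$, and by symmetry this is exactly the principal submatrix. You only need to take the columns in the order $(p,q)$ consistently.

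Your construction also gives slightly more than the statement. The middle factor can be chosen to be a $2\times 2$ principal submatrix of $A$, and the outer factor $B$ contains the rows $(1,0)$ and $(0,1)$.
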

The next preliminary result provides a lower bound for the nonnegative rank of a matrix \(A\in\mathbb{R}_{+}^{m\times n}\). To state this result, we first recall a basic definition.

\begin{definition}\cite{cohen1993nonnegative}
	\label{prel:definition_independent}
	Let $A\in\mathbb{R}_{+}^{m\times n}$
	be a nonnegative matrix. Two entries \(a_{ij}\) and \(a_{kp}\) of \(A\) are called independent if
	\[
	a_{ij}a_{kp}>0
	\qquad\text{and}\qquad
	a_{ip}a_{kj}=0.
	\]
\end{definition}

\begin{proposition}\cite[Lemma~2.4]{cohen1993nonnegative}
	\label{prel:theorem_independent}
	Let $A\in\mathbb{R}_{+}^{m\times n}$ be a nonnegative matrix. If \(A\) contains a set of \(q\) pairwise independent entries, then $\operatorname{rk}_{+}(A)\ge q.$
\end{proposition}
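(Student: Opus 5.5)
This is the cited result of Cohen--Rothblum (Proposition~\ref{prel:theorem_independent}). The plan is to argue directly from an arbitrary nonnegative factorization, showing that two independent entries can never be ``served'' by the same rank-one term.

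Suppose $\operatorname{rk}_{+}(A)=k$, and write
\[
A=\sum_{t=1}^{k} u_t v_t^{T}, \qquad u_t\in\mathbb{R}_{+}^{m},\ v_t\in\mathbb{R}_{+}^{n}.
\]
Let $a_{i_1j_1},\ldots,a_{i_qj_q}$ be the $q$ pairwise independent entries. For each $s$, since
\[
a_{i_sj_s}=\sum_{t} (u_t)_{i_s}(v_t)_{j_s}>0,
\]
we can choose an index $t(s)$ with $(u_{t(s)})_{i_s}>0$ and $(v_{t(s)})_{j_s}>0$. This defines a map $s\mapsto t(s)$ from $\{1,\ldots,q\}$ to $\{1,\ldots,k\}$.

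The key step is to show this map is injective. Suppose $t(s)=t(r)=t$ for some $s\neq r$. Then $(u_t)_{i_s}(v_t)_{j_r}>0$. Every summand in
\[
a_{i_sj_r}=\sum_{t'}(u_{t'})_{i_s}(v_{t'})_{j_r}
\]
is nonnegative, so $a_{i_sj_r}>0$. By the same argument $a_{i_rj_s}>0$, and hence $a_{i_sj_r}a_{i_rj_s}>0$. This contradicts the independence condition in Definition~\ref{prel:definition_independent}, which requires $a_{i_sj_r}a_{i_rj_s}=0$.

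Injectivity of $s\mapsto t(s)$ gives $q\le k=\operatorname{rk}_{+}(A)$. The only point needing care is that nonnegativity rules out cancellation in the sums, so a single positive summand already forces the corresponding entry of $A$ to be positive; beyond that there is no real obstacle.
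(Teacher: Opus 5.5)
Your argument is correct, and it is the standard rectangle (fooling-set) argument behind the cited Cohen--Rothblum lemma; the paper itself gives no proof and only cites it. The point is that the support of each rank-one term $u_tv_t^{T}$ is a combinatorial rectangle, so it cannot contain two independent entries, and injectivity of $s\mapsto t(s)$ then gives $q\le \operatorname{rk}_{+}(A)$. You use without comment that each chosen entry is positive. This follows from the definition ($a_{i_sj_s}a_{i_rj_r}>0$) whenever $q\ge 2$; for $q=1$ it is the implicit convention, since otherwise a zero entry of the zero matrix would be a counterexample.
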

\smallskip

We now construct a family of symmetric nonnegative matrices for which all the inequalities in Proposition~\ref{prel:proposition2.1} are strict. The construction is based on combining matrices that exhibit different gaps among the classical rank, nonnegative rank, SNT-rank, and CP-rank.

\begin{example}\label{ex:strict_inequalities}
Let $A=A_1\oplus A_2,$ where
\[
A_1=
\begin{pmatrix}
a & b & 0 & 0\\
b & 0 & c & 0\\
0 & c & 0 & d\\
0 & 0 & d & e
\end{pmatrix}
\quad
\text{and}
\quad
A_2=
\begin{pmatrix}
1 & 1 & 2 & 2\\
1 & 0 & 1 & 2\\
2 & 1 & 0 & 1\\
2 & 2 & 1 & 1
\end{pmatrix},
\]
with \(a,b,c,d,e>0\) satisfying $b^{2}d^{2}=ac^{2}e.$ Note that $\operatorname{rk}(A_1)=3$ and the entries
\[
(A_1)_{11},\quad (A_1)_{23},\quad (A_1)_{32},
\quad \text{and} \quad
(A_1)_{44}
\]
are pairwise independent in the sense of Definition~\ref{prel:definition_independent}. By Proposition~\ref{prel:theorem_independent}, $\operatorname{rk}_{+}(A_1)\geq4.$  Since $A_1 \in \mathbb{R}^{4 \times 4}_{+}$, Proposition~\ref{prel:proposition2.1} yields $\operatorname{st}_{+}(A_1)=\operatorname{rk}_{+}(A_1)=4.$ Next, one can verify that $\operatorname{rk}(A_2)=3.$ Moreover, by \cite[Example 2.3]{bukovvsek2023symmetric}, $\operatorname{rk}_{+}(A_2)=3$ and $\operatorname{st}_{+}(A_2)=4.$ Since \(A=A_1\oplus A_2\), $\operatorname{rk}(A)= 6$, $\operatorname{rk}_{+}(A) = 7$, and by Proposition~\ref{prel:proposition2.2}(c), $\operatorname{st}_{+}(A) = 8.$ Therefore,
\[
\operatorname{rk}(A)
<
\operatorname{rk}_{+}(A)
<
\operatorname{st}_{+}(A).
\]
Finally, the fact that \(A_2\) is not positive semidefinite implies that \(\operatorname{cp}(A)=\infty\). Therefore, every inequality in Proposition~\ref{prel:proposition2.1} is strict for the matrix \(A\).
\end{example}

\medskip

We next investigate the SNT-rank of symmetric nonnegative matrices of rank at most three. 

\begin{proposition}\label{prop:rank-snt-small}
Let $A \in \mathcal{S}_n^{+}$. Then the following statements hold:
\begin{enumerate}[(a)]
    \item For each $k \in \{1,2\}$, $\operatorname{rk}(A)=k$ if and only if $\operatorname{st}_{+}(A)=k.$

    \item If $\operatorname{st}_{+}(A)=3$, then $\operatorname{rk}(A)=3.$
\end{enumerate}
\end{proposition}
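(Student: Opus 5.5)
The plan is to deduce everything from Proposition~\ref{prel:proposition2.1}(a) and Proposition~\ref{prel:proposition2.3}, plus a direct treatment of the rank-one case.

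\emph{Part (a), case $k=1$.} If $\operatorname{st}_{+}(A)=1$, then Proposition~\ref{prel:proposition2.1}(a) gives $\operatorname{rk}(A)\le 1$. Since $\operatorname{st}_{+}(0)=0$, we have $A\neq 0$, so $\operatorname{rk}(A)=1$. Conversely, suppose $\operatorname{rk}(A)=1$.
\begin{itemize}
\item[(i)] Write $A=uv^{T}$. Symmetry gives $uv^{T}=vu^{T}$, so $v=\lambda u$ for some scalar $\lambda\neq 0$.
\item[(ii)] Some diagonal entry of $A$ is nonzero, for otherwise $\lambda u_i^2=0$ for all $i$ would force $u=0$.
\item[(iii)] Hence $A=\lambda uu^{T}$. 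The nonzero diagonal entries $\lambda u_i^2\ge 0$ show $\lambda>0$.
\item[(iv)] Every entry $\lambda u_iu_j$ is nonnegative and $A\neq 0$, so the nonzero coordinates of $u$ share a sign. Replacing $u$ by $-u$ if needed, we may take $u\ge 0$.
\end{itemize}
Then $A=u\,(\lambda)\,u^{T}$ is an SN-Trifactorization with $k=1$. So $\operatorname{st}_{+}(A)\le 1$, and together with $\operatorname{rk}(A)\le\operatorname{st}_{+}(A)$ this gives $\operatorname{st}_{+}(A)=1$.

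\emph{Part (a), case $k=2$.} If $\operatorname{rk}(A)=2$, Proposition~\ref{prel:proposition2.3} gives $\operatorname{st}_{+}(A)=2$ directly. Conversely, if $\operatorname{st}_{+}(A)=2$, then $\operatorname{rk}(A)\le 2$ by Proposition~\ref{prel:proposition2.1}(a). Also $A\ne 0$, and $\operatorname{rk}(A)\neq 1$, since otherwise the $k=1$ case would give $\operatorname{st}_{+}(A)=1$. Hence $\operatorname{rk}(A)=2$.

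\emph{Part (b).} If $\operatorname{st}_{+}(A)=3$, then $\operatorname{rk}(A)\le 3$. The value $\operatorname{rk}(A)=0$ would force $A=0$ and so $\operatorname{st}_{+}(A)=0$. The values $\operatorname{rk}(A)\in\{1,2\}$ would force $\operatorname{st}_{+}(A)\in\{1,2\}$ by part (a). Therefore $\operatorname{rk}(A)=3$.

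The only step needing care is the rank-one converse, namely producing a \emph{nonnegative} symmetric rank-one factorization. This comes down to the sign argument in steps (i)–(iv). Everything else is bookkeeping with the inequality $\operatorname{rk}\le\operatorname{st}_{+}$ and the rank-two result.
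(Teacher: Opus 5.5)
Your proof is correct and follows essentially the same route as the paper. The rank-one case uses an explicit nonnegative rank-one factorization, and the rank-two case uses Proposition~\ref{prel:proposition2.3}; the converses and part (b) then come from $\operatorname{rk}(A)\le\operatorname{st}_{+}(A)$ together with excluding smaller ranks. You add two details the paper leaves implicit: the sign argument giving $A=\lambda uu^{T}$ with $\lambda>0$ and $u\ge 0$ (the paper simply asserts $A=vv^{T}$ with $v\ge 0$), and an explicit exclusion of $\operatorname{rk}(A)=0$ via $\operatorname{st}_{+}(0)=0$.
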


\begin{proof}
$(a)$ We first assume that that $\operatorname{rk}(A)=1$. Then there exists a nonzero vector $v \in \mathbb{R}_{+}^{n}$ such that $A = vv^{\mathsf T}.$ Hence, $A$ admits an SN-Trifactorization with one column, and therefore $\operatorname{st}_{+}(A)\leq 1.$ Since $\operatorname{rk}(A)\leq \operatorname{st}_{+}(A),$ $\operatorname{st}_{+}(A)=1.$

Conversely, suppose that $\operatorname{st}_{+}(A)=1$. Then $A\neq 0$ and by Proposition~\ref{prel:proposition2.1}, $\operatorname{rk}(A)=1.$
\medskip

If $\operatorname{rk}(A)=2$, by Proposition~\ref{prel:proposition2.3}, $\operatorname{st}_{+}(A)=2.$ To prove the converse, assume that $\operatorname{st}_{+}(A)=2$. By Proposition~\ref{prel:proposition2.1}$(a)$, $\operatorname{rk}(A)\leq 2.$ Since $\operatorname{rk}(A)=1$ if and only if $\operatorname{st}_{+}(A)=1$, it follows that $\operatorname{rk}(A)=2.$

\medskip

$(b)$ Suppose that $\operatorname{st}_{+}(A)=3$. Using Proposition~\ref{prel:proposition2.1}$(a)$, we have $\operatorname{rk}(A)\leq 3.$
If $\operatorname{rk}(A)\leq 2$, then part~$(a)$ would imply that $\operatorname{st}_{+}(A)\leq 2,$
which contradicts the assumption. Therefore, $\operatorname{rk}(A)=3.$
\end{proof}


As an immediate consequence, the rank and the SNT-rank coincide for symmetric nonnegative matrices of order at most three.

\begin{corollary}\label{cor:rk=snt<=3}
Let $A \in \mathcal{S}_n^{+}$, where $ n\leq 3$. Then $\operatorname{rk}(A)=\operatorname{st}_{+}(A).$
\end{corollary}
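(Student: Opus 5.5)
The plan is a case split on the rank of $A$, using Proposition~\ref{prel:proposition2.1}(a) as the upper bound and Proposition~\ref{prop:rank-snt-small} to pin the SNT-rank down exactly. Since $A\in\mathcal{S}_n^{+}$ with $n\le 3$, Proposition~\ref{prel:proposition2.1}(a) gives
\[
\operatorname{rk}(A)\le \operatorname{st}_{+}(A)\le n\le 3,
\]
so both quantities lie in $\{0,1,2,3\}$.

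I will treat the possible values of $\operatorname{rk}(A)$ one at a time.

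\emph{Rank $0$.} Then $A=0$, and $\operatorname{st}_{+}(0)=0$ by convention.

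\emph{Rank $1$ or $2$.} Proposition~\ref{prop:rank-snt-small}(a) gives $\operatorname{st}_{+}(A)=\operatorname{rk}(A)$ directly.

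\emph{Rank $3$.} Here $n=3$, and the displayed inequality squeezes the SNT-rank: $3\le \operatorname{st}_{+}(A)\le 3$, so $\operatorname{st}_{+}(A)=3=\operatorname{rk}(A)$.

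I expect no real obstacle. The only point needing care is the zero matrix, which is covered by the convention $\operatorname{st}_{+}(0)=0$. The genuine work, that rank $2$ forces SNT-rank $2$, is already contained in Proposition~\ref{prel:proposition2.3}, which feeds into Proposition~\ref{prop:rank-snt-small}(a).
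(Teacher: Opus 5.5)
Your proof is correct and matches the paper's argument: you bound $\operatorname{rk}(A)\le\operatorname{st}_{+}(A)\le n\le 3$ via Proposition~\ref{prel:proposition2.1}(a), apply Proposition~\ref{prop:rank-snt-small}(a) for ranks $1$ and $2$, and squeeze for rank $3$. Your explicit handling of $A=0$ through the convention $\operatorname{st}_{+}(0)=0$ is a detail the paper leaves implicit.
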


\begin{proof}
By Proposition \ref{prel:proposition2.1},  $\operatorname{rk}(A)\leq \operatorname{st}_{+}(A) \leq n.$ Since $\operatorname{rk}(A)\leq 3$, the conclusion therefore follows directly from Proposition~\ref{prop:rank-snt-small}.
\end{proof}
The following example shows that the conclusion of Corollary \ref{cor:rk=snt<=3} does not hold in general.
\begin{example}\label{example:LEDM6_st=5}
Consider the matrix
\[
A=
\begin{pmatrix}
0 & 1 & 4 & 9 & 16 & 25\\
1 & 0 & 1 & 4 & 9 & 16\\
4 & 1 & 0 & 1 & 4 & 9 \\
9 & 4 & 1 & 0 & 1 & 4\\
16 & 9 & 4 & 1 & 0 & 1\\
25 & 16 & 9 & 4 & 1 & 0
\end{pmatrix}.
\]
Then $\operatorname{rk}(A)=3$ and by \cite[Example 2]{gillis2012geometric}, $\operatorname{rk}_{+}(A)=5.$ Observe that $A$ has an SN-Trifactorization $A=BCB^{T}$ with
\[
B=
\begin{pmatrix}
5 & 0 & 1 & 0 & 0\\
3 & 0 & 0 & 1 & 0\\
1 & 0 & 0 & 0 & 1\\
0 & 1 & 0 & 0 & 1\\
0 & 3 & 0 & 1 & 0\\
0 & 5 & 1 & 0 & 0
\end{pmatrix}
\qquad
\text{and}\qquad C=
\begin{pmatrix}
0 & 1 & 0 & 0 & 0\\
1 & 0 & 0 & 0 & 0\\
0 & 0 & 0 & 1 & 4\\
0 & 0 & 1 & 0 & 1\\
0 & 0 & 4 & 1 & 0
\end{pmatrix}.
\]
Thus, $\operatorname{st}_{+}(A)\leq 5.$ Since $\operatorname{rk}_{+}(A)\leq \operatorname{st}_{+}(A),$ $\operatorname{st}_{+}(A)=5.$ Thus, $\operatorname{st}_{+}(A)> \operatorname{rk}(A)$ showing that the equality in Corollary~\ref{cor:rk=snt<=3} does not hold in general.
\end{example}
%
%
Proposition~\ref{prel:proposition2.1} establishes that, for every symmetric nonnegative matrix $A$ of order $n$, $\operatorname{st}_{+}(A)\le n.$
This naturally raises the question of which values of $\operatorname{st}_{+}(A)$ can occur over the class $\mathcal{S}_n^{+}$, and of the structural properties of matrices attaining each such value. The following result partially answers this question.



\begin{proposition}\label{Basic results:proposition1}
For every integer $n \geq 1$ and each $k$ with $1 \leq k \leq n$, there exists a matrix $A \in \mathcal{S}_n^{+}$ such that $\operatorname{st}_{+}(A) = k$.	
%
\end{proposition}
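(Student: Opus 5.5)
The simplest construction is a direct sum. Fix $n\ge 1$ and $1\le k\le n$, and set
\[
A \;=\; I_k \oplus 0_{n-k} \in \mathcal{S}_n^{+},
\]
the diagonal matrix with $k$ ones followed by $n-k$ zeros.

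For the upper bound, we exhibit an SN-Trifactorization of $A$ with inner dimension $k$. Take
\[
B=\begin{pmatrix} I_k\\ 0_{(n-k)\times k}\end{pmatrix}\in\mathbb{R}_+^{n\times k}, \qquad C=I_k\in\mathcal{S}_k^{+}.
\]
Then $BCB^{T}=A$, so $\operatorname{st}_{+}(A)\le k$.

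For the lower bound, Proposition~\ref{prel:proposition2.1}(a) gives $\operatorname{st}_{+}(A)\ge \operatorname{rk}(A)=k$. Hence $\operatorname{st}_{+}(A)=k$.

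Alternatively, one can argue with direct sums. Write $A=(1)\oplus\cdots\oplus(1)\oplus 0_{n-k}$. Each $1\times 1$ block $(1)$ has SNT-rank $1$ by Proposition~\ref{prop:rank-snt-small}(a), and the zero block has SNT-rank $0$ by the stated convention $\operatorname{st}_{+}(0)=0$. Proposition~\ref{prel:proposition2.2}(c) then gives $\operatorname{st}_{+}(A)=k$ directly.

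There is no real obstacle in this argument. The only point to check is the boundary case $k=n$, where the zero block is empty; the first argument covers it verbatim, with $B=I_n$.

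If one wanted a positive (entrywise) example instead, one could use a rank-$k$ positive definite matrix, such as $J_n+\varepsilon\,(I_k\oplus 0)$ with suitable adjustments. The first argument above already suffices, however, so I would present that one.
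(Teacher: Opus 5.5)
Your proof is correct and essentially matches the paper's. The paper takes $A=vv^{T}\oplus I_{k-1}$ with $0\neq v\in\mathbb{R}_{+}^{n-k+1}$ and applies the direct-sum additivity of Proposition~\ref{prel:proposition2.2}(c), which is your second argument with a rank-one block in place of the zero block; your first argument avoids that proposition altogether by using an explicit factorization together with $\operatorname{rk}(A)\le\operatorname{st}_{+}(A)$. The only flaw is your unused closing aside: for $k<n$ the matrix $J_n+\varepsilon(I_k\oplus 0)$ has rank $k+1$ and is not positive definite, so drop it.
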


\begin{proof} 
%
 Fix $n \geq 1$ and $1 \leq k \leq n$. Let $A_{1} = vv^{T}$, where $0\neq v \in \mathbb{R}_+^{n-k+1}$ and $A_{2}= I_{k-1}$ be the identity matrix of order $k-1$. Now, we define
\[
A := A_{1} \oplus A_{2} = (vv^{T}) \oplus I_{k-1}.
\]
By Proposition~\ref{prel:proposition2.2}(c), $\operatorname{st}_{+}(A)= \operatorname{st}_{+}(A_1) +\operatorname{st}_{+}(A_2)= k$.
\end{proof}
 

In \cite[Proposition~2.3]{bukovvsek2023symmetric}, the authors established a relationship between \(\operatorname{st}_{+}(A+A^{T})\) and \(\operatorname{rk}_{+}(A)\) for every matrix \(A\in\mathbb{R}_{+}^{n\times n}\). We next derive a similar relationship between the SNT-rank of a matrix of the form \(AA^T\) and the nonnegative rank of the underlying rectangular matrix \(A\).

\begin{proposition}\label{Basic Results: lemma1}
Let $A\in\mathbb{R}_{+}^{m\times n}.$
Then
\[
\operatorname{rk}(A)
\le
\operatorname{st}_{+}(AA^{T})
\le
\operatorname{rk}_{+}(A).
\]
\end{proposition}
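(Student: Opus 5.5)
The argument splits into the two inequalities, each of which follows quickly from earlier results.

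For the lower bound, $AA^{T}$ is symmetric and nonnegative, so Proposition~\ref{prel:proposition2.1}(a) gives $\operatorname{rk}(AA^{T})\le \operatorname{st}_{+}(AA^{T})$. Over the reals one has $\operatorname{rk}(AA^{T})=\operatorname{rk}(A)$. To justify this, note that $AA^{T}x=0$ implies $x^{T}AA^{T}x=\|A^{T}x\|^{2}=0$. Hence $\ker(AA^{T})=\ker(A^{T})$, and so $\operatorname{rk}(AA^{T})=\operatorname{rk}(A^{T})=\operatorname{rk}(A)$. This yields the left inequality.

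For the upper bound, set $k=\operatorname{rk}_{+}(A)$ and choose a nonnegative factorization $A=UV^{T}$ with $U\in\mathbb{R}_{+}^{m\times k}$ and $V\in\mathbb{R}_{+}^{n\times k}$. Then
\[
AA^{T}=UV^{T}VU^{T}=U\,(V^{T}V)\,U^{T}.
\]
Take $B:=U\in\mathbb{R}_{+}^{m\times k}$ and $C:=V^{T}V$. The matrix $C$ is $k\times k$ and symmetric, and it is nonnegative because it is a product of nonnegative matrices, so $C\in\mathcal{S}_{k}^{+}$. This is therefore an SN-Trifactorization of $AA^{T}$ with inner dimension $k$, which gives $\operatorname{st}_{+}(AA^{T})\le \operatorname{rk}_{+}(A)$.

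The only care needed is with degenerate cases. If $A=0$, every quantity is $0$, using the convention $\operatorname{st}_{+}(0)=0$. Otherwise $k\ge 1$ and the construction above applies directly. I see no real obstacle; the key observation is simply that the middle factor $V^{T}V$ is automatically symmetric and nonnegative.
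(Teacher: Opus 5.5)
Your proof is correct and matches the paper's argument: the upper bound comes from rewriting $AA^{T}=U(V^{T}V)U^{T}$ for a minimal nonnegative factorization $A=UV^{T}$, and the lower bound from $\operatorname{rk}(AA^{T})=\operatorname{rk}(A)$ together with Proposition~\ref{prel:proposition2.1}. Your kernel argument for the rank identity and your treatment of the case $A=0$ are small details that the paper leaves implicit.
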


\begin{proof}
Let $A= UV^T\in\mathbb R_{+}^{m\times n}$ be a nonnegative matrix factorization of $A$ attaining $\operatorname{rk}_{+}(A)$, where $U\in\mathbb R_{+}^{m\times \operatorname{rk}_{+}(A)}$ and $V\in\mathbb R_{+}^{n\times \operatorname{rk}_{+}(A)}.$ Then
\[
AA^{T}
=
UV^{T}(UV^{T})^{T}
=
U(V^{T}V)U^{T}.
\]
is an SN-Trifactorization of \(AA^{T}\) with $V^{T}V \in \mathcal{S}_{\operatorname{rk}_{+}(A)}^{+}$. Thus,
\[
\operatorname{st}_{+}(AA^{T})
\le
\operatorname{rk}_{+}(A).
\]
Since $\operatorname{rk}(A)
=
\operatorname{rk}(AA^{T})$, by Proposition~\ref{prel:proposition2.1},
$
\operatorname{rk}(A)
\le
\operatorname{st}_{+}(AA^{T})
\le
\operatorname{rk}_{+}(A).
$
\end{proof}

The above proposition provides a useful tool for determining the SNT-rank of symmetric nonnegative matrices. We illustrate this with following  examples.

\begin{example}\label{example:rectangle_snt_rk}
Consider the matrices
\[
M=
\begin{pmatrix}
153 & 8 & 32+11a & 205 & 34\\
8 & 8 & 8 & 10 & 6\\
32+11a & 8 & 32+a^2 & 40+15a & 12+2a\\
205 & 10 & 40+15a & 275 & 45\\
34 & 6 & 12+2a & 45 & 10
\end{pmatrix}
\qquad
\text{and}
\qquad
A=
\begin{pmatrix}
4 & 0 & 4 & 11\\
0 & 2 & 2 & 0\\
4 & 0 & 4 & a\\
5 & 0 & 5 & 15\\
1 & 1 & 2 & 2
\end{pmatrix},
\]
where \(a>0\). Then $M=AA^T$ and $\operatorname{rk}(A)=3.$ Moreover, $A=UV^{T},$ where
\[
U=
\begin{pmatrix}
4 & 0 & 11\\
0 & 2 & 0\\
4 & 0 & a\\
5 & 0 & 15\\
1 & 1 & 2
\end{pmatrix}
\quad
\text{and}
\quad
V=
\begin{pmatrix}
1 & 0 & 0\\
0 & 1 & 0\\
1 & 1 & 0\\
0 & 0 & 1
\end{pmatrix}.
\]
Thus $\operatorname{rk}_{+}(A)\le3$, and by Proposition \ref{prel:proposition2.1}, $\operatorname{rk}_{+}(A)= 3.$ Using Proposition~\ref{Basic Results: lemma1}, we have $\operatorname{st}_{+}(M)=3.$
\end{example}

\begin{remark}
Consider the matrix
\[A=\begin{pmatrix}
	1 & 1 & 2 & 2\\
	1 & 0 & 1 & 2\\
	2 & 1 & 0 & 1\\
	2 & 2 & 1 & 1
\end{pmatrix}.
\]
 In \cite[Example~2.4]{bukovvsek2023symmetric}, it is shown that
$\operatorname{st}_{+}(A^{2})=3$ by constructing an SN-Trifactorization.
We observe that this conclusion also follows directly from
Proposition~\ref{Basic Results: lemma1}. Since $\operatorname{rk}(A)=3$, and by~\cite[Example~2.3]{bukovvsek2023symmetric}, $\operatorname{rk}_{+}(A)=3$, Proposition~\ref{Basic Results: lemma1} yields $\operatorname{st}_{+}(A^{2})=3.$
\end{remark}

In \cite[Lemma~2.1]{bukovvsek2023symmetric}, it was shown that permutation and positive diagonal congruence preserve the existence of an SN-Trifactorization of a given size for symmetric nonnegative matrices. As an immediate consequence, we show that the SNT-rank is invariant under permutation and positive diagonal congruence. This invariance will play a crucial role in the proof of our first main result.


\begin{proposition}\label{proposition:principal_rearrangement/diagonal_scaling}
Let $A \in \mathcal{S}_n^{+}$. Then the following holds.
\begin{enumerate}[(a)]
	\item $\operatorname{st}_{+}(A)=\operatorname{st}_{+}(PAP^{T})$ for every permutation matrix $P \in \mathbb{R}^{n \times n}$.
	
	\item $\operatorname{st}_{+}(A)=\operatorname{st}_{+}(DAD^{T})$ for every diagonal matrix $D \in \mathbb{R}^{n \times n}$ with positive diagonal entries.
\end{enumerate}

\end{proposition}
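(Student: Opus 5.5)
The plan is to prove both parts by the same two-sided inequality argument: transport an optimal SN-Trifactorization of $A$ to one of the transformed matrix of the same size, and then apply the same reasoning to the inverse transformation.

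For (a), let $k=\operatorname{st}_{+}(A)$ and write $A=BCB^{T}$ with $B\in\mathbb{R}_{+}^{n\times k}$ and $C\in\mathcal{S}_k^{+}$. Then
\[
PAP^{T}=(PB)C(PB)^{T}.
\]
Since $P$ has nonnegative entries, $PB\in\mathbb{R}_{+}^{n\times k}$, so this is an SN-Trifactorization of size $k$. Hence $\operatorname{st}_{+}(PAP^{T})\le \operatorname{st}_{+}(A)$. Next observe that $P^{T}=P^{-1}$ is also a permutation matrix and $A=P^{T}(PAP^{T})P$. Applying the same inequality with $P^{T}$ in place of $P$ and $PAP^{T}$ in place of $A$ gives the reverse inequality.

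For (b), we argue in the same way. Here $DB$ is nonnegative because $D$ has positive diagonal entries, so $DAD^{T}=(DB)C(DB)^{T}$ is an SN-Trifactorization of size $k$, giving $\operatorname{st}_{+}(DAD^{T})\le\operatorname{st}_{+}(A)$. For the reverse inequality, $D^{-1}$ is again diagonal with positive diagonal entries, and $A=D^{-1}(DAD^{T})(D^{-1})^{T}$, using $D^{T}=D$.

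Alternatively, both parts follow at once from \cite[Lemma~2.1]{bukovvsek2023symmetric}, which says these congruences preserve the existence of an SN-Trifactorization of a given size. We also need the edge case $A=0$, where both sides equal $0$ by convention. There is no genuine obstacle here. The only point needing care is that the inverse transformation is of the same type (nonnegative, and invertible within the class), and this is what yields equality rather than just an inequality.
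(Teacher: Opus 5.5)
Your proposal is correct and matches the paper's proof: you push an optimal factorization $A=BCB^{T}$ to $(PB)C(PB)^{T}$ (resp.\ $(DB)C(DB)^{T}$) for one inequality, then apply the same step to the inverse congruence by $P^{T}$ (resp.\ $D^{-1}\ge 0$) for the reverse. Your separate handling of $A=0$ is a small extra point the paper leaves implicit.
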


\begin{proof}
$(a)$ Let $A = B C B^T$ be an SN-Trifactorization of $A$ that achieves $\operatorname{st}_{+}(A)$. Let $P \in \mathbb{R}^{n \times n}$ be a permutation matrix. Then $P A P^T = (P B) C (P B)^T.$ Since $P B \in \mathbb{R}_{+}^{n \times \operatorname{st}_{+}(A)}$, it follows that
\[
\operatorname{st}_{+}(P A P^T) \le \operatorname{st}_{+}(A).
\]
Since $A = P^T (P A P^T) P,$ by the previous steps, $
\operatorname{st}_{+}(A) \le \operatorname{st}_{+}(P A P^T).$
Thus
\[
\operatorname{st}_{+}(A) = \operatorname{st}_{+}(P A P^T).
\]

$(b)$ The proof follows by an argument analogous to that of part $(a)$, together with the fact that $D, D^{-1} \ge 0$.
\end{proof}

Proposition~\ref{proposition:principal_rearrangement/diagonal_scaling} motivates the following result, which will be useful in analyzing the behavior of the SNT-rank under nonnegative matrix congruence transformations.

\begin{lemma}\label{lemma:XTAX construction}
Let $A \in \mathcal{S}_n^{+}$ and let $X \in \mathbb{R}_{+}^{n \times n}$ be a nonnegative matrix. Then
\[
\operatorname{st}_{+}(X A X^T) \leq \operatorname{st}_{+}(A).
\]
\end{lemma}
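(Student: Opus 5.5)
The plan is to push an optimal SN-Trifactorization of $A$ through the congruence by $X$; this follows the pattern of the proof of Proposition~\ref{proposition:principal_rearrangement/diagonal_scaling}(a), but only one direction is needed.

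Set $k=\operatorname{st}_{+}(A)$. If $k=0$, then $A=0$, so $XAX^T=0$ and $\operatorname{st}_{+}(XAX^T)=0$ by convention; the inequality holds. Otherwise, fix an SN-Trifactorization $A=BCB^T$ with $B\in\mathbb{R}_{+}^{n\times k}$ and $C\in\mathcal{S}_k^{+}$. Then
\[
XAX^T = X B C B^T X^T = (XB)\,C\,(XB)^T .
\]

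We check that this is an admissible SN-Trifactorization of size $k$.

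\begin{enumerate}[(i)]
\item $XAX^T\in\mathcal{S}_n^{+}$. It is symmetric since $(XAX^T)^T=XA^TX^T=XAX^T$, and it is nonnegative because it is a product of nonnegative matrices.
\item $XB\in\mathbb{R}_{+}^{n\times k}$, since $X$ and $B$ are both entrywise nonnegative.
\item $C$ is unchanged, so $C\in\mathcal{S}_k^{+}$.
\end{enumerate}

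It follows that $\operatorname{st}_{+}(XAX^T)\le k=\operatorname{st}_{+}(A)$.

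There is no real obstacle here. The only point needing care is the degenerate case $XAX^T=0$ (for instance $X=0$), where the convention $\operatorname{st}_{+}(0)=0$ makes the inequality trivial. We do not need $X$ to be invertible or even square in any essential way; square $X$ suffices for the statement as written.
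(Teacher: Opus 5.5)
Your proof is correct and matches the paper's argument: you push an optimal factorization $A=BCB^T$ through to $XAX^T=(XB)\,C\,(XB)^T$ and use that $XB\ge 0$. Your explicit treatment of the degenerate case $\operatorname{st}_{+}(A)=0$ is a small addition that the paper leaves implicit.
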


\begin{proof}
Let $A = B C B^{T}$ be an SN-Trifactorization of $A$ that achieves $\operatorname{st}_{+}(A)$. Then
\[
X A X^T = XB C B^{T} X^T = (X B)\, C\, (X B)^{T}.
\]
Thus, the above representation is an SN-Trifactorization of $X A X^{T}$ with $X B \in \mathbb{R}_{+}^{n \times \operatorname{st}_{+}(A)}$. Hence
\[
\operatorname{st}_{+}(X A X^T) \leq \operatorname{st}_{+}(A).
\]
\end{proof}

Note that the inequality in Lemma~\ref{lemma:XTAX construction} can be strict.

\begin{example}
Consider the matrices
\[
A = \begin{pmatrix}
1 & 0\\
0 & 1
\end{pmatrix} \quad \text{and} \quad
X =
\begin{pmatrix}
1 & 1\\
1 & 1
\end{pmatrix}.
\]
Since $\operatorname{rk}(A) = 2$, by Proposition~\ref{prop:rank-snt-small}, $\operatorname{st}_{+}(A) = 2$. However,
\[
X A X^{T} = X X^{T} = \begin{pmatrix}
2 & 2\\
2 & 2
\end{pmatrix} = \begin{pmatrix}
1\\
1 
\end{pmatrix} \begin{pmatrix}
2\\
\end{pmatrix}\begin{pmatrix}
1 & 1\\ 
\end{pmatrix}.
\]
Thus $\operatorname{st}_{+}(XAX^{T}) < \operatorname{st}_{+}(A),$ showing that the inequality in Lemma~\ref{lemma:XTAX construction} can be strict.
\end{example}

In Lemma~\ref{lemma:XTAX construction}, we established that for a symmetric nonnegative matrix $A \in \mathcal{S}_n^{+}$ and a nonnegative matrix $X \in \mathbb{R}_+^{n \times n}$, $
\operatorname{st}_{+}(X A X^T) \leq \operatorname{st}_{+}(A).
$ This naturally leads to the following question: for which matrices $A \in \mathcal{S}_n^{+}$ does the equality
\[
\operatorname{st}_{+}(X A X^T) = \operatorname{st}_{+}(A)
\]
hold for a given nonnegative matrix $X \in \mathbb{R}_+^{n \times n}$? A characterization of such matrices would provide further insight into the behavior of the SNT-rank under congruence transformations. The following result provides a partial answer to this question.

\begin{proposition}\label{pro:rk_1/2:XAX construction}
Let $A \in \mathcal{S}_n^{+}$ and let $X \in \mathbb{R}^{n \times n}_{+}$  such that $XAX^T\neq 0$. Then
\begin{enumerate}[(a)]
    \item If $\operatorname{rk}(A) = 1$, then 
    $\operatorname{st}_{+}(XAX^{T}) = \operatorname{st}_{+}(A).$
    
    \item If $\operatorname{rk}(A) = 2$, then $\operatorname{st}_{+}(XAX^{T}) = \operatorname{st}_{+}(A)$ if and only if $\operatorname{rk}(XAX^{T}) = 2.$
\end{enumerate}
\end{proposition}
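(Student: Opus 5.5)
The plan is to reduce everything to the rank characterizations in Proposition~\ref{prop:rank-snt-small}(a), using only the elementary inequality $\operatorname{rk}(XAX^T)\le \operatorname{rk}(A)$ together with the hypothesis $XAX^T\neq 0$.

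For part (a), suppose $\operatorname{rk}(A)=1$. By Proposition~\ref{prop:rank-snt-small}(a), $\operatorname{st}_{+}(A)=1$. Since $\operatorname{rk}(XAX^T)\le \operatorname{rk}(A)=1$ and $XAX^T\neq 0$, we get $\operatorname{rk}(XAX^T)=1$. The matrix $XAX^T$ is symmetric and nonnegative, so Proposition~\ref{prop:rank-snt-small}(a) gives $\operatorname{st}_{+}(XAX^T)=1=\operatorname{st}_{+}(A)$. One could equally invoke Lemma~\ref{lemma:XTAX construction} for the upper bound $\operatorname{st}_+(XAX^T)\le 1$ and nonvanishing for the lower bound.

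For part (b), suppose $\operatorname{rk}(A)=2$, so $\operatorname{st}_{+}(A)=2$ by Proposition~\ref{prop:rank-snt-small}(a).

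\emph{If direction.} If $\operatorname{rk}(XAX^T)=2$, then $\operatorname{st}_{+}(XAX^T)=2$, again by Proposition~\ref{prop:rank-snt-small}(a).

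\emph{Only-if direction.} Suppose $\operatorname{st}_{+}(XAX^T)=2$. The equivalence in Proposition~\ref{prop:rank-snt-small}(a) for $k=2$ directly gives $\operatorname{rk}(XAX^T)=2$. Alternatively, $\operatorname{rk}(XAX^T)\in\{1,2\}$ because it is nonzero with rank at most $2$, and rank $1$ would force SNT-rank $1$, a contradiction.

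There is no genuine obstacle here. The only points to check are that $XAX^T\in\mathcal S_n^+$, so that the proposition applies, and that the hypothesis $XAX^T\ne0$ is what excludes rank $0$.
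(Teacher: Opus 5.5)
Your proposal is correct and follows essentially the paper's argument: both reduce everything to Proposition~\ref{prop:rank-snt-small}(a). The only differences are minor. In part (a) the paper gets rank one from the explicit factorization $XAX^T=(Xu)(Xu)^T$ instead of the inequality $\operatorname{rk}(XAX^T)\le\operatorname{rk}(A)$. In the only-if direction of (b) the paper uses exactly your alternative contradiction argument.
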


\begin{proof}
$(a)$ Let $A \in \mathcal{S}_n^{+}$ with $\operatorname{rk}(A) = 1$. Then there exists a nonzero vector $u \in \mathbb{R}_{+}^{n}$ such that $A = uu^{T}$.  For any $X \in \mathbb{R}^{n \times n}_{+}$,
\[
XAX^{T} = Xuu^{T}X^{T} = (Xu)(Xu)^{T}.
\]
Since $XAX^T\neq 0$,  $XAX^{T}$ is a rank-one nonnegative symmetric matrix, and by Proposition \ref{prop:rank-snt-small},
\[
\operatorname{st}_{+}(XAX^{T})=1=\operatorname{st}_{+}(A).
\]

\medskip
\noindent
$(b)$ Suppose that $\operatorname{rk}(A)=2$. By Proposition~\ref{prop:rank-snt-small}, we have $\operatorname{st}_{+}(A)=2$. First, assume that $\operatorname{rk}(XAX^{T})=2$. Since $\operatorname{rk}(XAX^{T}) \le \operatorname{st}_{+}(XAX^{T})$, it follows that
\[
2 \le \operatorname{st}_{+}(XAX^{T}) \le 2=\operatorname{st}_{+}(A).
\]
Thus $\operatorname{st}_{+}(XAX^{T})=\operatorname{st}_{+}(A)$.

Conversely, suppose that
$
\operatorname{st}_{+}(XAX^{T})=\operatorname{st}_{+}(A)=2.
$
Since $\operatorname{rk}(XAX^{T}) \le \operatorname{st}_{+}(XAX^{T})$, we have $\operatorname{rk}(XAX^{T}) \le 2$. Thus $\operatorname{rk}(XAX^{T}) \in \{1,2\}$. If $\operatorname{rk}(XAX^{T})=1$, then by Proposition~\ref{prop:rank-snt-small}, we obtain $\operatorname{st}_{+}(XAX^{T})=1$, which contradicts the assumption. Therefore, $\operatorname{rk}(XAX^{T})=2.$
\end{proof}

\begin{example}\label{ex:rank3_counterexample}
Proposition~\ref{pro:rk_1/2:XAX construction} establishes that for a symmetric nonnegative matrix $A \in \mathcal{S}_n^{+}$ and a nonnegative matrix $X \in \mathbb{R}_+^{n \times n}$, the equality $\operatorname{st}_{+}(X A X^T) = \operatorname{st}_{+}(A)$ holds whenever $\operatorname{rk}(XAX^{T})=\operatorname{rk}(A) \leq 2.$ The following example shows that this conclusion fails in general when $\operatorname{rk}(A) \geq 3$, even if $\operatorname{rk}(XAX^{T})=\operatorname{rk}(A)$. Consider the matrix
\[
A =
\begin{pmatrix}
1 & 1 & 0 & 0\\
1 & 0 & 1 & 0\\
0 & 1 & 0 & 1\\
0 & 0 & 1 & 1
\end{pmatrix}
\qquad \text{and} \qquad
X =
\begin{pmatrix}
1 & 1 & 0 & 0\\
0 & 1 & 1 & 0\\
0 & 0 & 1 & 1\\
0 & 0 & 0 & 1
\end{pmatrix}.
\]
Then $\operatorname{rk}(A)=\operatorname{rk}(XAX^T)=3$, and by \cite[Example 2.1]{bukovvsek2023symmetric}, $\operatorname{st}_{+}(A)=4.$ Note that $XAX^T$ admits the SN-Trifactorization
\[
XAX^T =
\begin{pmatrix}
1 & 0 & 0\\
\frac{1}{2} & \frac{1}{2} & 1\\
0 & 1 & 2\\
0 & 1 & 0
\end{pmatrix}
\begin{pmatrix}
3 & 0 & \frac{1}{2}\\
0 & 1 & \frac{1}{2}\\
\frac{1}{2} & \frac{1}{2} & 0
\end{pmatrix}
\begin{pmatrix}
1 & \frac{1}{2} & 0 & 0\\
0 & \frac{1}{2} & 1 & 1\\
0 & 1 & 2 & 0
\end{pmatrix},
\]
which implies $\operatorname{st}_{+}(XAX^T) \leq 3$. By Proposition \ref{prel:proposition2.1}, we have $\operatorname{st}_{+}(XAX^T) = 3.$ Hence, $\operatorname{st}_{+}(XAX^T) < \operatorname{st}_{+}(A)$.

\end{example}



\section{\textbf{SNT-rank and nonnegative rank with respect to the Kronecker product}}

In this section, we investigate the behavior of the SNT-rank of symmetric nonnegative matrices under the Kronecker product and prove our main results -- Theorems~\ref{lemma:kroneckerproduct}, \ref{theorem:kroneckersntequality_nonnegativerank}, and \ref{theorem:kroneckersntequality_nonnegativerank2}. In particular, we establish its submultiplicativity and derive several corresponding bounds. We also obtain results related to conjectures concerning the nonnegative rank under the Kronecker product.

\subsection{\textbf{Kronecker Product and the SNT-Rank}}

We begin by  proving Theorem \ref{lemma:kroneckerproduct} which establishes the submultiplicativity of the SNT-rank with respect to the Kronecker product.


\begin{proof}[Proof of Theorem \ref{lemma:kroneckerproduct}]
Let $A_1 = B_1 C_1 B_1^{T}$ and $A_2 = B_2 C_2 B_2^{T}$ be SN-Trifactorization of $A_1$ and $A_2$ that achieve $\operatorname{st}_{+}(A_1)$ and $\operatorname{st}_{+}(A_2)$ respectively, where $B_i \in \mathbb{R}_{+}^{n_i \times \operatorname{st}_{+}(A_i)}$ and $C_i \in \mathcal{S}_{\operatorname{st}_{+}(A_i)}^{+}$ for $i=1,2$. Using basic properties of the Kronecker product, we obtain
\begin{align*}\label{equation:SNTkronecker}
A_1 \otimes A_2 
&= (B_1C_1B_1^{T}) \otimes (B_2C_2B_2^{T})\nonumber \\[6pt]
&= (B_1\otimes B_2)(C_1\otimes C_2)(B_1 \otimes B_2)^{T}.
\end{align*}
Since $B_1 \otimes B_2 \ge 0$ and $C_1 \otimes C_2 \in \mathcal{S}_{\operatorname{st}_{+}(A_1) \operatorname{st}_{+}(A_2)}^{+}$, the above expression gives an SN-Trifactorization of $A_1 \otimes A_2$. Hence,
$$
\operatorname{st}_{+}(A_1 \otimes A_2) 
\leq 
\operatorname{st}_{+}(A_1) \operatorname{st}_{+}(A_2).
$$ 
The lower bound follows directly from the identity $\operatorname{rk}(A_1 \otimes A_2) = \operatorname{rk}(A_1)\operatorname{rk}(A_2)$ together with Proposition~\ref{prel:proposition2.1}.

For the second assertion, assume that $n_1,n_2\le3$ and $A_{1} \in \mathcal{S}_{n_1}^{+},~A_{2} \in \mathcal{S}_{n_2}^{+}$. By  Corollary \ref{cor:rk=snt<=3}, $\operatorname{rk}(A_1)
=
\operatorname{st}_{+}(A_1)$ and $\operatorname{rk}(A_2)
=
\operatorname{st}_{+}(A_2)$. Using the first part of the proof, we have
\[
\operatorname{st}_{+}(A_1 \otimes A_2)
=
\operatorname{st}_{+}(A_1)\operatorname{st}_{+}(A_2).
\]
\end{proof}

Theorem \ref{lemma:kroneckerproduct} provides a useful method for constructing symmetric nonnegative matrices with SNT-rank at most 9.

\begin{example}
Consider the matrices
\[
A_{1}=
\begin{pmatrix}
1 & 1 & 0\\
1 & 1 & 0\\
0 & 0 & 1
\end{pmatrix}
\qquad\text{and}\qquad
A_{2}=
\begin{pmatrix}
1 & 0 & 1\\
0 & 1 & 1\\
1 & 1 & 2
\end{pmatrix}.
\]
Note that $\operatorname{rk}(A_{1})
=
\operatorname{rk}(A_{2})
=
2$ and, by Corollary~\ref{cor:rk=snt<=3}, $\operatorname{st}_{+}(A_{1})
=
\operatorname{st}_{+}(A_{2})
=
2.$ Then $A_{1}\otimes A_{2} \in \mathcal{S}_{9}^{+}$ and by Theorem \ref{lemma:kroneckerproduct}, $\operatorname{st}_{+}(A_{1}\otimes A_{2})
=
4.$
\end{example}

\begin{remark}\label{remmult}
Let $A_{1} \in \mathcal{S}_{n_1}^{+}$ and $A_{2} \in \mathcal{S}_{n_2}^{+}$ with $\operatorname{st}_{+}(A_1)=\operatorname{rk}(A_1)$ and  $\operatorname{st}_{+}(A_2)=\operatorname{rk}(A_2)$ . Then, as an immediate consequence of Theorem~\ref{lemma:kroneckerproduct}, it follows that  \[\operatorname{st}_{+}(A_1 \otimes A_2)
=
\operatorname{st}_{+}(A_1) \operatorname{st}_{+}(A_2).\]  The following example illustrates this consequence.
\end{remark}
\begin{example}
	Consider the matrices
	\[
	A_{1}=
	\begin{pmatrix}
		1 & 0 & 1\\
		0 & 1 & 1\\
		1 & 1 & 2
	\end{pmatrix}
	\qquad\text{and}\qquad
	A_{2}=
	\begin{pmatrix}
		153 & 8 & 32+11a & 205 & 34\\
		8 & 8 & 8 & 10 & 6\\
		32+11a & 8 & 32+a^2 & 40+15a & 12+2a\\
		205 & 10 & 40+15a & 275 & 45\\
		34 & 6 & 12+2a & 45 & 10
	\end{pmatrix},
	\]
	where \(a>0\). Since $\operatorname{rk}(A_{1})=2,$ it follows from Corollary~\ref{cor:rk=snt<=3} that $\operatorname{st}_{+}(A_{1})=2.$ Moreover, for every \(a>0\), $\operatorname{rk}(A_{2})=3,$ and $\operatorname{st}_{+}(A_{2})=3$ (see the matrix $M$ in Example~\ref{example:rectangle_snt_rk}). By Remark~\ref{remmult},
	\[
	\operatorname{st}_{+}(A_1 \otimes A_2)
	=
	\operatorname{st}_{+}(A_1)\operatorname{st}_{+}(A_2)
	=
	6.\]
\end{example}


We next show that the multiplicativity of the SNT-rank under the Kronecker product continues to hold for a broader class of symmetric nonnegative matrices under an additional assumption.

\begin{theorem}\label{theorem:kroneckersntequality_sntrank}
Let $A_{1} \in \mathcal{S}_{n_1}^{+}$ and $A_{2} \in \mathcal{S}_{n_2}^{+}$, where at least one of them has rank $1$. Then
\[
\operatorname{st}_{+}(A_1 \otimes A_2)
=
\operatorname{st}_{+}(A_1) \operatorname{st}_{+}(A_2).
\]
\end{theorem}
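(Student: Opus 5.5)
Without loss of generality I will assume that $\operatorname{rk}(A_1)=1$. The case $\operatorname{rk}(A_2)=1$ is symmetric, because $A_2\otimes A_1$ is obtained from $A_1\otimes A_2$ by a simultaneous permutation of rows and columns, and Proposition~\ref{proposition:principal_rearrangement/diagonal_scaling}(a) shows that this does not change the SNT-rank. By Proposition~\ref{prop:rank-snt-small}(a) we have $\operatorname{st}_{+}(A_1)=1$. Hence Theorem~\ref{lemma:kroneckerproduct} gives $\operatorname{st}_{+}(A_1\otimes A_2)\le \operatorname{st}_{+}(A_2)$, and it remains to prove the reverse inequality $\operatorname{st}_{+}(A_2)\le \operatorname{st}_{+}(A_1\otimes A_2)$.

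Since $A_1$ is a nonzero symmetric nonnegative matrix of rank one, we may write $A_1=uu^{T}$ with $0\neq u\in\mathbb{R}_{+}^{n_1}$. Choose an index $i$ with $u_i>0$. Then the $(i,i)$ entry of $A_1$ is $u_i^2>0$. Now look at the principal submatrix of $A_1\otimes A_2$ indexed by the $i$-th diagonal block. By the definition of the Kronecker product this submatrix is exactly $u_i^2A_2$.

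We then apply Proposition~\ref{prel:proposition2.2}(b) to this principal submatrix, which gives $\operatorname{st}_{+}(u_i^2A_2)\le \operatorname{st}_{+}(A_1\otimes A_2)$. Positive scaling does not change the SNT-rank: either absorb the factor $u_i^2$ into $C$, or use Proposition~\ref{proposition:principal_rearrangement/diagonal_scaling}(b) with $D=u_iI$. Therefore $\operatorname{st}_{+}(A_2)\le\operatorname{st}_{+}(A_1\otimes A_2)$, which is the reverse inequality we needed.

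The proof has no real obstacle. The only point that needs care is choosing a nonzero diagonal entry of $A_1$, and such an entry always exists because a nonzero rank-one symmetric nonnegative matrix has the form $uu^T$.
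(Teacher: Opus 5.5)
Your proposal is correct and follows essentially the same route as the paper. You reduce to $\operatorname{rk}(A_1)=1$, get the upper bound from submultiplicativity together with $\operatorname{st}_{+}(A_1)=1$, and get the lower bound from the principal submatrix $u_i^2A_2$ with $u_i>0$; you also spell out the WLOG step (via the permutation similarity of $A_1\otimes A_2$ and $A_2\otimes A_1$) and the scaling invariance $\operatorname{st}_{+}(u_i^2A_2)=\operatorname{st}_{+}(A_2)$, which the paper leaves implicit.
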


\begin{proof}
  Without loss of generality, assume that $\operatorname{rk}(A_1)=1$. Then there exists a nonzero vector $u = (u_i) \in \mathbb{R}_+^{n_1}$ such that $A_1 = uu^T$, and
\[
A_1 \otimes A_2 = \begin{pmatrix}
u_1^2 A_2 & u_1 u_2 A_2 & \cdots & u_1 u_{n_1} A_2\\
u_2 u_1 A_2 & u_2^2 A_2 & \cdots & u_2 u_{n_1} A_2\\
\vdots & \vdots & \ddots & \vdots \\
u_{n_1} u_1 A_2 & u_{n_1} u_2 A_2 & \cdots & u_{n_1}^2 A_2
\end{pmatrix}.
\]
Since $u \neq 0$, there exists $j \in \{1,\dots,n_1\}$ such that $u_j > 0$ and $u_{j}^{2}A_2$ is a principal submatrix of $ A_1 \otimes A_2$. By Proposition~\ref{prel:proposition2.2}(b),
\[
\operatorname{st}_{+}(A_2) \leq \operatorname{st}_{+}(A_1 \otimes A_2).
\]
Since $A_1$ is rank one matrix, by Proposition~\ref{prop:rank-snt-small}, $\operatorname{st}_{+}(A_{1})=1$. Using Theorem~\ref{lemma:kroneckerproduct}, we conclude that
\[
\operatorname{st}_{+}(A_1 \otimes A_2)
=
\operatorname{st}_{+}(A_1)\operatorname{st}_{+}(A_2).
\]
\end{proof}

\begin{remark}
A closer inspection of the proof of Theorem~\ref{theorem:kroneckersntequality_sntrank} reveals the following reformulation: Let $A_{1} \in \mathcal{S}_{n_1}^{+}$ and $A_{2} \in \mathcal{S}_{n_2}^{+}$,  where at least one of them has rank $1$. Then, the SNT-rank of  the Kronecker product satisfies
\[
\operatorname{st}_{+}(A_1 \otimes A_2)
=
\max\{\operatorname{st}_{+}(A_1), \operatorname{st}_{+}(A_2)\}.
\] 
\end{remark}

We next show that the SNT-rank is submultiplicative with respect to the Hadamard (Schur) product.
The Hadamard (Schur) product of two matrices $A=(a_{ij})$ and $B=(b_{ij})$ of same size is denoted by $A\circ B$
and is defined by
\[
A\circ B = (a_{ij} b_{ij}).
\]

\begin{proposition}\label{proposition:hadamardproduct}
Let $A_1$ and $A_2$ be two symmetric nonnegative matrices of order $n$. Then,
\[
\operatorname{st}_{+}(A_1 \circ A_2) \leq \operatorname{st}_{+}(A_1) \operatorname{st}_{+}(A_2).
\]
\end{proposition}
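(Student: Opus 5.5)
The plan is to realize $A_1\circ A_2$ as a principal submatrix of $A_1\otimes A_2$. Then Proposition~\ref{prel:proposition2.2}(b) and Theorem~\ref{lemma:kroneckerproduct} give the bound at once. As a check, and for a self-contained route, we will also write down the explicit SN-Trifactorization.

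First, recall that the Kronecker product $A_1\otimes A_2\in\mathcal{S}_{n^2}^{+}$ has entries indexed by pairs, with
\[
(A_1\otimes A_2)_{(i,k),(j,l)}=(A_1)_{ij}(A_2)_{kl}.
\]
Choose the index set $\{(i,i): 1\le i\le n\}$, the "diagonal" pairs. The principal submatrix of $A_1\otimes A_2$ on these indices has $(i,j)$ entry $(A_1)_{ij}(A_2)_{ij}$, which is exactly $A_1\circ A_2$. Equivalently, $A_1\circ A_2=S^{T}(A_1\otimes A_2)S$, where $S\in\mathbb{R}_{+}^{n^2\times n}$ is the $0/1$ selection matrix with columns $e_i\otimes e_i$.

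Second, we apply Proposition~\ref{prel:proposition2.2}(b) to this principal submatrix and then the upper bound in Theorem~\ref{lemma:kroneckerproduct}:
\[
\operatorname{st}_{+}(A_1\circ A_2)\le\operatorname{st}_{+}(A_1\otimes A_2)\le\operatorname{st}_{+}(A_1)\operatorname{st}_{+}(A_2).
\]

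Alternatively, the factorization can be written directly. Let $A_r=B_rC_rB_r^{T}$ be optimal SN-Trifactorizations for $r=1,2$. Then
\[
A_1\circ A_2=(B_1\ast B_2)(C_1\otimes C_2)(B_1\ast B_2)^{T},
\]
where $B_1\ast B_2$ is the row-wise Kronecker (face-splitting) product, whose $i$-th row is $b^{(1)}_i\otimes b^{(2)}_i$. This factor is nonnegative with $\operatorname{st}_{+}(A_1)\operatorname{st}_{+}(A_2)$ columns, and $C_1\otimes C_2$ is symmetric nonnegative. The identity follows from the mixed-product rule:
\[
(b^{(1)}_i\otimes b^{(2)}_i)^{T}(C_1\otimes C_2)(b^{(1)}_j\otimes b^{(2)}_j)=\big(b^{(1)T}_iC_1b^{(1)}_j\big)\big(b^{(2)T}_iC_2b^{(2)}_j\big).
\]

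There is no real obstacle. The only point needing care is the index bookkeeping that identifies the diagonal-pair principal submatrix with the Hadamard product. The degenerate case where some $A_r=0$ is trivial, since then $A_1\circ A_2=0$ and both sides of the inequality are $0$.
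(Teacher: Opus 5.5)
Your proof is correct and follows the paper's argument: $A_1\circ A_2$ is the principal submatrix of $A_1\otimes A_2$ on the index pairs $(i,i)$, so Proposition~\ref{prel:proposition2.2}(b) combined with the submultiplicativity in Theorem~\ref{lemma:kroneckerproduct} gives the bound. Your explicit face-splitting factorization is also valid, and it is the same argument written out, since $B_1\ast B_2=S^{T}(B_1\otimes B_2)$.
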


\begin{proof}
Let $A_1, A_2 \in \mathcal{S}_n^{+}$. Then $A_1 \circ A_2 \in \mathcal{S}_n^{+}$.
Since $A_1 \circ A_2$ is a principal submatrix of $A_1 \otimes A_2$,
the result follows from Proposition~\ref{prel:proposition2.2} and Theorem~\ref{lemma:kroneckerproduct}.
\end{proof}

The following example shows that, for any $n \ge 2$, the inequality in
Proposition~\ref{proposition:hadamardproduct} can be strict.

\begin{example}
For $n \ge 2$, let $A_1, A_2 \in \mathcal{S}_n^{+}$ be the matrices whose
leading $2 \times 2$ principal blocks are
\[
\begin{pmatrix}
0 & 1\\[2pt]
1 & 0
\end{pmatrix}
\quad \text{and} \quad
\begin{pmatrix}
1 & 0\\[2pt]
0 & 1
\end{pmatrix},
\]
respectively, and whose remaining entries are zero. 
Then $\operatorname{st}_{+}(A_1) = 2$ and $\operatorname{st}_{+}(A_2) = 2$, while $\operatorname{st}_{+}(A_1 \circ A_2) = 0$.
\end{example}
\subsection{\textbf{Kronecker Product and the nonnegative rank}}
In this subsection, we prove Theorems~\ref{theorem:kroneckersntequality_nonnegativerank} and \ref{theorem:kroneckersntequality_nonnegativerank2}, establishing that the nonnegative rank is multiplicative with respect to the Kronecker product under suitable assumptions. We begin with proving Theorem~\ref{theorem:kroneckersntequality_nonnegativerank}.

\begin{proof}[Proof of Theorem~\ref{theorem:kroneckersntequality_nonnegativerank}]
Using a standard factorization argument and the basic properties of the Kronecker product, one can derive the following inequality for all nonnegative matrices:
\begin{equation} \label{eqnkronrank1}
\operatorname{rk}_{+}(X_1 \otimes X_2)
\le
\operatorname{rk}_{+}(X_1)\operatorname{rk}_{+}(X_2) \hbox { for all } X_1 \in \mathbb{R}_{+}^{m_1 \times n_1} \hbox{ and } X_2 \in \mathbb{R}_{+}^{m_2 \times n_2}.
\end{equation}
Now, without loss of generality, assume that $\operatorname{rk}(A_1)=1$. Then, there exist nonzero vectors $u = (u_i) \in \mathbb{R}_+^{m_1}$ and $v = (v_i) \in \mathbb{R}_+^{n_1}$  such that $A_1 = uv^T$. Thus $\operatorname{rk}_{+}( A_1)=1$.  Since the nonnegative rank of a submatrix of a nonnegative matrix does not exceed that of the original matrix, by a similar argument as in the proof of Theorem \ref{theorem:kroneckersntequality_sntrank}, we have
\begin{equation}\label{eqnkronrank2}
\operatorname{rk}_{+}( A_2)
\le
\operatorname{rk}_{+}(A_1\otimes A_2).
\end{equation}
Combining \eqref{eqnkronrank1} and \eqref{eqnkronrank2} with $\operatorname{rk}_{+}( A_1)=1$ completes the proof.
\end{proof}

We next prove Theorems~\ref{theorem:kroneckersntequality_nonnegativerank2}. The proof requires the following basic result.

\begin{lemma}\cite[Corollary 4.2]{cohen1993nonnegative}\label{lemma:rk=rk+<=3}
$A \in \mathbb{R}_{+}^{m \times n}$ with either $m\leq 3$ or $n\leq 3$.  Then,
\[
\operatorname{rk}_{+}(A) 
= 
\operatorname{rk}(A).
\]
\end{lemma}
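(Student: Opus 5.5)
The plan is a case analysis on $r:=\operatorname{rk}(A)$, always using the sandwich $\operatorname{rk}(A)\le \operatorname{rk}_{+}(A)\le \min\{m,n\}$ recalled in the introduction. Transposition preserves both rank and nonnegative rank ($A=UV^{T}$ iff $A^{T}=VU^{T}$), so the hypothesis ``$m\le 3$ or $n\le 3$'' can be used simply as $\min\{m,n\}\le 3$, and hence $r\le 3$. The case $r=0$ is trivial, since then $A=0$ and both ranks are $0$. If $r=3$, then $3=\operatorname{rk}(A)\le\operatorname{rk}_{+}(A)\le\min\{m,n\}\le 3$, which forces equality. If $r=1$, then $A=uv^{T}$ with $u,v$ nonzero; since $A\ge 0$, both vectors may be taken nonnegative (flip both signs if necessary), so $\operatorname{rk}_{+}(A)=1$.

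This leaves the case $r=2$, which I expect to be the main obstacle. Here the sandwich does not close by itself when $\min\{m,n\}=3$, so I will construct an explicit nonnegative factorization of size $2$. Discard the zero columns of $A$, since they do not affect either rank, and scale each remaining column $a_j$ to $\hat a_j=a_j/(\mathbf 1^{T}a_j)$. This is legitimate because $a_j\ge 0$ and $a_j\neq 0$ give $\mathbf 1^{T}a_j>0$.

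The normalized columns lie in the intersection of the $2$-dimensional column space $W$ of $A$ with the affine hyperplane $\{x:\mathbf 1^{T}x=1\}$. That hyperplane does not contain the origin and meets $W$ (it contains some $\hat a_j$), so the intersection is an affine line. Hence all $\hat a_j$ lie on a single line, and among them there are two extreme points $\hat a_p$ and $\hat a_q$ such that every $\hat a_j$ lies on the segment $[\hat a_p,\hat a_q]$. These two points are distinct, since otherwise $r\le 1$.

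Each column can therefore be written as $a_j=(\mathbf 1^{T}a_j)\big(\lambda_j\hat a_p+(1-\lambda_j)\hat a_q\big)$ with $\lambda_j\in[0,1]$, and the zero columns are $0\cdot\hat a_p+0\cdot\hat a_q$. Putting $U=[\hat a_p\ \hat a_q]\in\mathbb R_{+}^{m\times 2}$ and letting $V^{T}$ be the nonnegative $2\times n$ matrix of these coefficients gives $A=UV^{T}$. Thus $\operatorname{rk}_{+}(A)\le 2=\operatorname{rk}(A)$, which closes the last case.
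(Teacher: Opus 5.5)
Your proof is correct and complete. The paper gives no argument of its own and simply cites Cohen--Rothblum; your proof follows the standard argument behind that result. The rank-two case is handled by showing that two extreme columns generate the cone spanned by all the columns (your normalized-segment construction), and the rank-three case then follows from $\operatorname{rk}(A)\le\operatorname{rk}_{+}(A)\le\min\{m,n\}\le 3$.
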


\begin{proof}[Proof of Theorem~\ref{theorem:kroneckersntequality_nonnegativerank2}]
From \eqref{eqnkronrank1}, \[\operatorname{rk}_{+}(A_1 \otimes A_2)
\le
\operatorname{rk}_{+}(A_1)\operatorname{rk}_{+}(A_2).\]  It remains to prove the reverse inequality.
By hypothesis either $m_i \le 3$ or $n_i \le 3$ for each $i=1,2$. By Lemma~\ref{lemma:rk=rk+<=3}, $\operatorname{rk}_{+}(A_1) = \operatorname{rk}(A_1)$ and $\operatorname{rk}_{+}(A_2) = \operatorname{rk}(A_2)$. Now, using the fact that $
\operatorname{rk}(X) \le \operatorname{rk}_{+}(X)
$ for any nonnegative matrix $X$, we obtain
\[
\operatorname{rk}_{+}(A_1)\operatorname{rk}_{+}(A_2)=\operatorname{rk}(A_1)\operatorname{rk}(A_2)=\operatorname{rk}(A_1 \otimes A_2)\leq \operatorname{rk}_{+}(A_1 \otimes A_2).
\]
This concludes the proof.
\end{proof}

\section{\textbf{SNT-rank and Euclidean distance matrices}}

In this section, we investigate the SNT-rank of  Euclidean distance matrices and prove Theorem~\ref{theorem:SNT_rank_Naturalno}. For $n\geq 3$, every Euclidean distance matrix has rank three \cite{mmlin2010edm}. Since Proposition~\ref{prop:rank-snt-small} shows that the SNT-rank of a symmetric nonnegative matrix coincides with its rank whenever the rank is at most two, rank-three matrices constitute the first nontrivial case. Indeed, the SNT-rank of a rank-three symmetric nonnegative matrix may strictly exceed its rank (see Example~\ref{example:LEDM6_st=5}). More generally, Shitov \cite[Section~3]{shitov2025further} proved that every rank-three symmetric nonnegative matrix $A$ of order $n$ satisfies $
\operatorname{st}_{+}(A)\leq 294n^{2/3}$, and, in the special case of the Euclidean distance matrix $A(1,\ldots,n)$, obtained the sharper bound $\operatorname{st}_{+}\!\big(A(1,\ldots,n)\big)
\leq
4\log_2 n +4.$ Our goal in this section is to obtain improved bounds for the SNT-rank of several classes of  Euclidean distance matrices. We begin with the following definition.

\begin{definition}
	 For an index set $\alpha\subseteq\{1,\ldots,n\},$ let \(\bar{\alpha}\) denote its complement in \(\{1,\ldots,n\}\). If $A \in \mathbb{R}^{n \times n}$ and $\alpha,\beta\subseteq\{1,\ldots,n\},$ then \(A_{\alpha\beta}\) denotes the submatrix of \(A\) formed by the rows indexed by \(\alpha\) and the columns indexed by \(\beta\). If \(\alpha=\beta\), the principal submatrix \(A_{\alpha \alpha}\) is abbreviated to $A_{\alpha}$.
\end{definition}
 We now prove our first main result Theorem \ref{theorem:SNT_rank_Naturalno}.

\begin{proof}[Proof of Theorem \ref{theorem:SNT_rank_Naturalno}]
By Proposition~\ref{proposition:principal_rearrangement/diagonal_scaling}$(a)$, the SNT-rank of a symmetric nonnegative matrix is invariant under any principal rearrangement,
\begin{equation}\label{eqaution:SNT-rankqual}
\operatorname{st}_{+}\big(A(1, \ldots, n, n+1, \ldots, 2n)\big)=\operatorname{st}_{+}\big(A(1, \ldots, n, 2n, \ldots, n+1)\big).  
\end{equation}
Let $\alpha = \{1, \ldots, n\}, ~\bar{\alpha} = \{2n, \ldots, n+1\}$ and
set $M :=A(1, \ldots, n, 2n, \ldots, n+1)$. Then 
\[
M =
\begin{pmatrix}
M_{\alpha} & M_{\alpha\bar{\alpha}} \\[4pt]
M_{\bar{\alpha}\alpha} & M_{\bar{\alpha}}
\end{pmatrix}
\in \mathcal{S}_{2n}^{+}
\]
and $M_{\alpha} = M_{\bar{\alpha}}= A(1, \ldots, n)$. Next, consider the off-diagonal block $M_{\alpha\bar{\alpha}}$. For $i \in \alpha$ and $j \in \bar{\alpha}$,
\[
(M_{\alpha\bar{\alpha}})_{ij} 
= \big(i - (2n - j + 1)\big)^2 
= (i - j)^2 + (2n - 2i + 1)(2n - 2j + 1).
\]
Define $X:= (x_{ij})$ with $x_{ij} = (2n - 2i + 1)(2n - 2j + 1)$. Then
\[
M_{\alpha\bar{\alpha}} = M_{\alpha} + X,
\]
and by the symmetry of $M$, $M_{\bar{\alpha}\alpha}= M_{\alpha\bar{\alpha}}^{T} =  M_{\alpha} + X$. Thus,
\begin{eqnarray}
A(1, \ldots, n, 2n, \ldots, n+1)
&=&
\begin{pmatrix}
M_{\alpha} & M_{\alpha\alpha} + X \\[4pt]
M_{\alpha\alpha} + X & M_{\alpha}
\end{pmatrix} \nonumber \\
&=&
\begin{pmatrix}
1 & 1\\
1 & 1
\end{pmatrix}
\otimes M_{\alpha}
+
\begin{pmatrix}
0 & 1\\
1 & 0
\end{pmatrix}
\otimes X.\label{equation:JS-matrix kroneckerproduct}
\end{eqnarray}
Since $\operatorname{st}_{+}
\!\left(
\begin{pmatrix}
1 & 1\\
1 & 1
\end{pmatrix}
\right)
=1,$ $\operatorname{st}_{+}
\!\left(
\begin{pmatrix}
0 & 1\\
1 & 0
\end{pmatrix}
\right)
=2,$ applying Proposition~\ref{prel:proposition2.2} and Theorem~\ref{lemma:kroneckerproduct} to \eqref{equation:JS-matrix kroneckerproduct}, together with the fact that $X$ has SNT-rank one, yields
\[
\operatorname{st}_{+}\!\big(A(1, \ldots, n, 2n, \ldots, n+1)\big)
\leq
\operatorname{st}_{+}\!\big(A(1, \ldots, n)\big)+2.
\]
Now, using \eqref{eqaution:SNT-rankqual}, we obtain the following relation for the SNT-rank:
\begin{equation}\label{equation:SNT-rank_main equation}
\operatorname{st}_{+}\!\big(A(1, \ldots, n, n+1, \ldots, 2n)\big)
\leq \operatorname{st}_{+}\!\big(A(1, \ldots, n)\big) + 2.   
\end{equation}
With this inequality in hand, we now complete the proof by considering the following two cases.
\smallskip

\textbf{Case 1:} 
Note that $\operatorname{st}_{+}(A(1)) = 0$. If $n > 1$ and $n$ is a power of two, then using the recurrence relation in \eqref{equation:SNT-rank_main equation}, we obtain
\[
\operatorname{st}_{+}\!\big(A(1, \ldots, n)\big)
\leq 2 \log_2 n.
\]

\textbf{Case 2:}
If $n$ is not a power of two, then $n < 2^{\lceil \log_2 n \rceil}$, and hence $A(1, \ldots, n)$ is a principal submatrix of $A(1, \ldots, n, \ldots, 2^{\lceil \log_2 n \rceil})$. Therefore, by Proposition~\ref{prel:proposition2.2} and the result of Case~1, we have
\[
\operatorname{st}_{+}\!\big(A(1, \ldots, n)\big) \leq \operatorname{st}_{+}\!\big(A(1, \ldots, n, \ldots, 2^{\lceil \log_2 n \rceil})\big) \leq 2 \lceil \log_2 n \rceil.
\]
Combining the two cases, we conclude that
\[
\operatorname{st}_{+}\!\big(A(1, \ldots, n)\big)
\leq 2 \lceil \log_2 n \rceil.
\]
This completes the proof.
\end{proof}


The following table summarizes several exact values and sharp bounds for the SNT-rank of  Euclidean distance matrices. The values and bounds are obtained by combining the lower bounds for the nonnegative rank from Table~1 of \cite{gillis2012geometric} and Table~1 of \cite{vandaele2016heuristics} with the upper bounds established in  Theorem~\ref{theorem:SNT_rank_Naturalno} and \eqref{helbnd1}.
\begin{table}[H]
\centering
\tiny
\caption{SNT-ranks and bounds for the matrices $A(1,\ldots,n)$.}
\label{tab:LEDM_snt}
\renewcommand{\arraystretch}{1.12}
\begin{tabular}{@{}ccccl@{}}
\toprule
$n$
& $\operatorname{rk}(A(1,\ldots,n))$
& $\operatorname{rk}_{+}(A(1,\ldots,n))$
& $\operatorname{st}_{+}(A(1,\ldots,n))$
& Status \\
\midrule
1  & 0 & 0  & 0 & Trivial case \\
2  & 2 & 2  & 2 & Exact value \\
3  & 3 & 3  & 3 & Exact value \\
4  & 3 & 4  & 4 & Exact value \\
5  & 3 & 5  & 5 & Exact value \\
6  & 3 & 5  & 5 & Exact value \\
7  & 3 & 6  & 6 & Exact value \\
8  & 3 & 6  & 6 & Exact value \\
10 & 3 & 7  & 7 & Exact value \\
12 & 3 & 7  & $7\text{--}8$ & Partial bounds \\
16 & 3 & 8  & 8 & Exact value \\
32 & 3 & $9\text{--}10$ & $9\text{--}10$ & Partial bounds \\
\bottomrule
\end{tabular}
\end{table}

Table~\ref{tab:LEDM_snt} illustrates that, although the usual rank of $A(1,\ldots,n)$ remains equal to 3 for all $n\geq3$, its SNT-rank increases with $n$.

\medskip

As immediate consequences of  Theorem~\ref{theorem:SNT_rank_Naturalno}, we now provide upper bounds for the SNT-ranks of several linear Euclidean distance matrices.

\begin{corollary}\label{cor:lEDM_integer_snt}
Let $x_1 \le x_2 \le \cdots \le x_n$ be integers, and let
$A(x_1,\ldots,x_n)\in\mathcal{S}_n^{+}$ be the matrix defined in
\eqref{eqnedm}.
Then
\[
\operatorname{st}_{+}\!\big(A(x_1,\ldots,x_n)\big)
\leq
2\left\lceil
\log_2(x_n-x_1+1)
\right\rceil.
\]
\end{corollary}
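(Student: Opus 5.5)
The plan is to reduce the statement to Theorem~\ref{theorem:SNT_rank_Naturalno} in two steps: first a translation, then an embedding as a principal submatrix. Entries of $A(x_1,\ldots,x_n)$ depend only on the differences $x_i-x_j$. Setting $y_i := x_i - x_1 + 1$ gives $A(x_1,\ldots,x_n) = A(y_1,\ldots,y_n)$ exactly, where now
\[
1 = y_1 \le y_2 \le \cdots \le y_n = x_n - x_1 + 1 =: N
\]
are positive integers. So it suffices to bound $\operatorname{st}_{+}\big(A(y_1,\ldots,y_n)\big)$ by $2\lceil \log_2 N\rceil$.

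\textbf{Case 1: the $x_i$ are pairwise distinct.} Then $y_1<\cdots<y_n$ is a subset of $\{1,\ldots,N\}$. The matrix $A(y_1,\ldots,y_n)$ is precisely the principal submatrix $A(1,\ldots,N)_{\alpha}$ with $\alpha=\{y_1,\ldots,y_n\}$. Proposition~\ref{prel:proposition2.2}(b) and Theorem~\ref{theorem:SNT_rank_Naturalno} then give
\[
\operatorname{st}_{+}\big(A(y_1,\ldots,y_n)\big)\le \operatorname{st}_{+}\big(A(1,\ldots,N)\big)\le 2\lceil\log_2 N\rceil .
\]

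\textbf{Case 2: repeated values.} This is the only real obstacle, since $A(y_1,\ldots,y_n)$ is then not literally a principal submatrix of $A(1,\ldots,N)$. Let $z_1<\cdots<z_r$ be the distinct values among the $y_i$, and let $P\in\mathbb{R}_{+}^{n\times r}$ be the $0/1$ matrix with $P_{ik}=1$ exactly when $y_i=z_k$. Then
\[
A(y_1,\ldots,y_n)=P\,A(z_1,\ldots,z_r)\,P^{T}.
\]
If $A(z_1,\ldots,z_r)=BCB^T$ is an SN-Trifactorization, then $(PB)C(PB)^T$ is an SN-Trifactorization of $A(y_1,\ldots,y_n)$ of the same size. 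This is the argument of Lemma~\ref{lemma:XTAX construction}, which does not need $X$ to be square. Hence
\[
\operatorname{st}_{+}\big(A(y)\big)\le \operatorname{st}_{+}\big(A(z)\big).
\]
Case~1 applied to $z$, with $z_1=1$ and $z_r=N$, bounds the right-hand side by $2\lceil\log_2 N\rceil$.

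\textbf{Degenerate case $N=1$.} Here all $x_i$ are equal, so $A=0$ and $\operatorname{st}_{+}(A)=0=2\lceil\log_2 1\rceil$. The bound holds trivially, consistent with the convention $\operatorname{st}_{+}(0)=0$.
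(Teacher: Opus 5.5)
Your proof is correct and follows the same route as the paper. You translate so the parameters lie in $\{1,\ldots,N\}$ with $N=x_n-x_1+1$, embed into $A(1,\ldots,N)$, and apply Proposition~\ref{prel:proposition2.2}(b) together with Theorem~\ref{theorem:SNT_rank_Naturalno}.

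Your Case~2 is actually more careful than the paper. The paper simply asserts that $A(x_1,\ldots,x_n)$ is a principal submatrix of $A(1,\ldots,N)$, but the hypothesis $x_1\le\cdots\le x_n$ allows repeated values. A repeated value creates an off-diagonal zero, so no such embedding exists. Your factorization $A(y)=P\,A(z)\,P^{T}$, combined with the rectangular version of Lemma~\ref{lemma:XTAX construction}, closes exactly that gap.
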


\begin{proof}
Observe that $A(x_1,\ldots,x_n)$ is invariant under translating all its parameters by the same integer, that is,
\[
A(x_1,\ldots,x_n)
=
A(x_1+k,\ldots,x_n+k),
\qquad
\text{for every integer } k
\]
Set $m:=x_n-x_1+1.$ Translating the parameters by $1-x_1$, we may assume without loss of generality that
 $x_1,\ldots,x_n\in\{1,2,\ldots,m\}.$ Hence, $A(x_1,\ldots,x_n)$ is a principal submatrix of $A(1,\ldots,m)$. By Proposition~\ref{prel:proposition2.2}(b) and Theorem~\ref{theorem:SNT_rank_Naturalno},
\[
\operatorname{st}_{+}\!\big(A(x_1,\ldots,x_n)\big)
\leq
\operatorname{st}_{+}\!\big(A(1,\ldots,m)\big)
\leq
2\lceil \log_2 m\rceil.
\]
Finally, substituting $m=x_n-x_1+1$ into the above inequality completes the proof.
\end{proof}

\smallskip
We next derive an upper bound for the SNT-rank of the Euclidean distance matrix associated with the harmonic sequence  $x_i=\frac1i,$  for $i=1,\ldots,n.$

\begin{corollary}
$\operatorname{st}_{+}\!\big(A(1,1/2,\ldots,1/n)\big)
\leq
2\lceil\log_2 n\rceil.$
\end{corollary}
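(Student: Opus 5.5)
The plan is to reduce the harmonic matrix to the integer matrix $A(1,\ldots,n)$ by a positive diagonal congruence, and then invoke Theorem~\ref{theorem:SNT_rank_Naturalno} together with Proposition~\ref{proposition:principal_rearrangement/diagonal_scaling}(b).

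The key step is the entrywise identity
\[
\Bigl(\frac1i-\frac1j\Bigr)^2=\frac{(j-i)^2}{i^2j^2}=\frac{1}{i^2}\,(i-j)^2\,\frac{1}{j^2}.
\]
It shows that
\[
A(1,1/2,\ldots,1/n)=D\,A(1,\ldots,n)\,D^{T},\qquad D=\operatorname{diag}(1,1/2^2,\ldots,1/n^2).
\]
Since $D$ has positive diagonal entries, Proposition~\ref{proposition:principal_rearrangement/diagonal_scaling}(b) gives
\[
\operatorname{st}_{+}\bigl(A(1,1/2,\ldots,1/n)\bigr)=\operatorname{st}_{+}\bigl(A(1,\ldots,n)\bigr).
\]
The bound $2\lceil\log_2 n\rceil$ then follows immediately from Theorem~\ref{theorem:SNT_rank_Naturalno}. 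Lemma~\ref{lemma:XTAX construction} with $X=D$ would also suffice, since only the inequality is needed.

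The main obstacle is simply spotting the factorization $(1/i-1/j)^2=(i-j)^2/(i^2j^2)$. Once it is in hand, checking the congruence is routine: compute the $(i,j)$ entry of $DA(1,\ldots,n)D^{T}$, which equals $i^{-2}(i-j)^2j^{-2}$. The case $n=1$ is trivial, since both sides are then $0$.
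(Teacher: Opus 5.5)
Your proposal is correct and follows the paper's proof: the paper uses the same congruence $A(1,1/2,\ldots,1/n)=D\,A(1,\ldots,n)\,D$ with $D=\operatorname{diag}(1,1/2^2,\ldots,1/n^2)$. It then applies Proposition~\ref{proposition:principal_rearrangement/diagonal_scaling}(b) and Theorem~\ref{theorem:SNT_rank_Naturalno}, exactly as you do.
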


\begin{proof}
For $i,j=1,\ldots,n$,
\[
(A(1,1/2,\ldots,1/n))_{ij}
=
\left(\frac1i-\frac1j\right)^2
=
\frac{(i-j)^2}{i^2j^2}.
\]
Define $D:
=
\operatorname{diag}
\left(
1,\frac1{2^2},\ldots,\frac1 {n^2}
\right)$. Then
\[
A(1,1/2,\ldots,1/n)
=
D\,A(1,\ldots,n)\,D.
\]
Using Proposition~\ref{proposition:principal_rearrangement/diagonal_scaling}(b) and Theorem~\ref{theorem:SNT_rank_Naturalno}, we have
\[
\operatorname{st}_{+}\!\big(A(1,1/2,\ldots,1/n)\big)=\operatorname{st}_{+}\!\big(A(1,2,\ldots,n)\big)
\leq
2\lceil\log_2 n\rceil.\]
\end{proof}

We now extend the preceding results to Euclidean distance matrices associated with rational parameters.

\begin{corollary}\label{cor:rational_ledm}
Let $x_i=\frac{p_i}{q_i},$ where $p_i\in\mathbb Z$ and $q_i\in\mathbb N$ for $i =1, \ldots,n$. Then
\[
\operatorname{st}_{+}\!\big(A(x_1,\ldots,x_n)\big)
\le
2\left\lceil
\log_2\bigl(\max_i y_i-\min_i y_i+1\bigr)
\right\rceil,
\]
where $y_i=\operatorname{lcm}(q_1,\ldots,q_n)x_i$ for $i=1,\ldots,n.$
\end{corollary}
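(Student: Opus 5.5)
The plan is to reduce to the integer case, Corollary~\ref{cor:lEDM_integer_snt}, by a uniform positive scaling of the parameters.

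Set $L:=\operatorname{lcm}(q_1,\ldots,q_n)$, so that each $y_i = Lx_i = p_i\cdot (L/q_i)$ is an integer.

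The first step is the identity
\[
A(x_1,\ldots,x_n) = \Bigl(\frac{1}{L^2}\Bigr) A(y_1,\ldots,y_n).
\]
It holds entrywise because
\[
(x_i-x_j)^2 = \frac{(y_i-y_j)^2}{L^2}.
\]
Multiplying a matrix by a positive scalar does not change its SNT-rank. One way to see this is to absorb the scalar into $C$ in any SN-Trifactorization $BCB^T$. Alternatively, apply Proposition~\ref{proposition:principal_rearrangement/diagonal_scaling}(b) with $D=\frac1L I_n$. Either way,
\[
\operatorname{st}_{+}(A(x_1,\ldots,x_n))=\operatorname{st}_{+}(A(y_1,\ldots,y_n)).
\]

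The second step handles the ordering. Corollary~\ref{cor:lEDM_integer_snt} is stated for nondecreasing integers, but the $y_i$ need not be sorted. Choose a permutation $\sigma$ with $y_{\sigma(1)}\le\cdots\le y_{\sigma(n)}$, and let $P$ be the corresponding permutation matrix. Then
\[
A(y_{\sigma(1)},\ldots,y_{\sigma(n)}) = P A(y_1,\ldots,y_n)P^T,
\]
so by Proposition~\ref{proposition:principal_rearrangement/diagonal_scaling}(a) the SNT-rank is unchanged.

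The last step applies Corollary~\ref{cor:lEDM_integer_snt} to the sorted integers. Since $y_{\sigma(n)}-y_{\sigma(1)}=\max_i y_i-\min_i y_i$, this gives exactly the claimed bound.

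There is no real obstacle here; the only care needed is the permutation step and the scalar invariance. The degenerate case $A=0$ (all $y_i$ equal) is consistent with the bound, since then $\operatorname{st}_+=0\le 2\lceil\log_2 1\rceil=0$.
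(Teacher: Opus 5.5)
Your proposal is correct and is essentially the paper's own proof. Both rescale by $L=\operatorname{lcm}(q_1,\ldots,q_n)$ to get integer parameters, note that the positive factor $L^{-2}$ leaves the SNT-rank unchanged (you justify this more explicitly than the paper does, e.g.\ via $D=\frac1L I_n$ in Proposition~\ref{proposition:principal_rearrangement/diagonal_scaling}(b)), sort the $y_i$ by a permutation congruence, and apply Corollary~\ref{cor:lEDM_integer_snt}.
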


\begin{proof}
Set $Q:=\operatorname{lcm}(q_1,\ldots,q_n)$. Then $y_i:=Qx_i$ is an integer for each $i=1,\ldots,n$. Since
\[
(y_i-y_j)^2
=
Q^2(x_i-x_j)^2,
\qquad i,j=1,\ldots,n,
\]
and $Q^2>0$, it follows that
\[
\operatorname{st}_{+}\!\big(A(x_1,\ldots,x_n)\big)
=
\operatorname{st}_{+}\!\big(A(y_1,\ldots,y_n)\big).
\]
By Proposition~\ref{proposition:principal_rearrangement/diagonal_scaling}, the SNT-rank is invariant under simultaneous permutations of rows and columns. Therefore, without loss of generality, we may assume that
\[
y_1\le\cdots\le y_n.
\]
Now, applying Corollary~\ref{cor:lEDM_integer_snt} to the integers
$y_1,\ldots,y_n$, we obtain
\[
\operatorname{st}_{+}\!\big(A(y_1,\ldots,y_n)\big)
\le
2\left\lceil
\log_2(y_n-y_1+1)
\right\rceil.
\]
Since $y_1\le\cdots\le y_n$, the desired conclusion follows immediately
\end{proof}

\medskip
We now move beyond rational parameters and study Euclidean distance matrices associated with real numbers. To this end, we first establish the following lemma, which provides a class of symmetric nonnegative matrices with SNT-rank at most four. This lemma will play a key role in the proof of the next theorem.

\begin{lemma}\label{lemma:XX^{T}}
Let $x_1,\dots,x_n$ and $y_1,\dots,y_n$ be real numbers satisfying
\[
\min_{1\le p\le n}|x_p|
\ge
\max_{1\le q\le n} y_q.
\]
Define $X := (|x_i|-y_j)\in \mathbb{R}^{n \times n}$ and  $A:=
\begin{pmatrix}
0 & X\\
X^{T} & 0
\end{pmatrix}$. Then $A\in\mathcal S_{2n}^{+}$ and $\operatorname{st}_{+}(A)\le4.$
\end{lemma}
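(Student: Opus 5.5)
Nonnegativity is immediate: for all $i,j$ we have $|x_i|-y_j\ge \min_p|x_p|-\max_q y_q\ge 0$. So $X\ge 0$, and $A$ is symmetric by construction. Hence $A\in\mathcal S_{2n}^{+}$.

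For the rank bound, the plan is to write $X$ as a nonnegative matrix of nonnegative rank at most two. Then we embed $A$ into an explicit SN-Trifactorization of size four. Set $m:=\max_q y_q$, $a_i:=|x_i|-m\ge 0$ and $b_j:=m-y_j\ge 0$. Then
\[
X_{ij}=|x_i|-y_j=a_i+b_j .
\]
Writing $e$ for the all-ones vector in $\mathbb R^{n}$, this gives
\[
X=ae^{T}+eb^{T}=UV^{T},\qquad U=(a\ \ e)\in\mathbb R_{+}^{n\times 2},\quad V=(e\ \ b)\in\mathbb R_{+}^{n\times 2}.
\]
Thus $\operatorname{rk}_{+}(X)\le 2$, and the factorization is explicit.

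Next we use the general fact that any block matrix $\begin{pmatrix}0 & UV^T\\ VU^T & 0\end{pmatrix}$ has an SN-Trifactorization of size $2k$, where $k$ is the number of columns of $U$ and $V$. Take
\[
B=\begin{pmatrix}U & 0\\ 0 & V\end{pmatrix}\in\mathbb R_{+}^{2n\times 4},\qquad C=\begin{pmatrix}0 & I_2\\ I_2 & 0\end{pmatrix}\in\mathcal S_4^{+}.
\]
Multiplying out gives
\[
BCB^{T}=\begin{pmatrix}0 & UV^{T}\\ VU^{T} & 0\end{pmatrix}=\begin{pmatrix}0 & X\\ X^{T}&0\end{pmatrix}=A.
\]
Therefore $\operatorname{st}_{+}(A)\le 4$.

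The only real obstacle is spotting the splitting $|x_i|-y_j=(|x_i|-m)+(m-y_j)$ with both summands nonnegative. This is exactly where the hypothesis $\min_p|x_p|\ge\max_q y_q$ enters, and it is what makes both rank-one pieces nonnegative. Everything after that is a routine block multiplication.
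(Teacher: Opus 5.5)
Your proposal is correct and follows the paper's proof: the paper uses the same splitting $X=u\mathbf 1^{T}+\mathbf 1 v^{T}$ with $u_i=|x_i|-b$, $v_j=b-y_j$ to get $\operatorname{rk}_{+}(X)\le 2$. It then cites Bukov\v{s}ek--\v{S}migoc's bound $\operatorname{st}_{+}(M+M^{T})\le 2\operatorname{rk}_{+}(M)$ applied to $M=\begin{pmatrix}0 & X\\ 0 & 0\end{pmatrix}$, and your explicit factorization with $B=\begin{pmatrix}U&0\\0&V\end{pmatrix}$ and $C=\begin{pmatrix}0&I_2\\I_2&0\end{pmatrix}$ is exactly what that cited result produces in this case.
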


\begin{proof}
Since $\min_{1\le p\le n}|x_p|
\ge
\max_{1\le q\le n} y_q,$ it follows that $x_{ij}:=|x_i|-y_j\ge0$ for $i,j=1,\ldots,n.$
Thus $A\in\mathcal S_{2n}^{+}.$ To show $\operatorname{st}_{+}(A)\le4$, set $b:=\max_{1\le q\le n} y_q,$ $u_i:=|x_i|-b,$ and $v_j:=b-y_j$ for all $i,j$.
Then $u_i\geq0$ and  $v_j\ge0$ for all $i,j$, and $X
=
u\mathbf 1^{T}
+
\mathbf 1 v^{T},$
where $\mathbf 1$ denotes the all-ones vector. Thus, $\operatorname{rk}_{+}(X)\le2.$ Also note that $\operatorname{rk}_{+}\left(\begin{pmatrix}
	0 & X\\
	0 & 0
\end{pmatrix}\right)=\operatorname{rk}_{+}(X)$. Therefore, by \cite[Proposition 2.3]{bukovvsek2023symmetric}, we have
\[
\operatorname{st}_{+}(A)\leq
2\operatorname{rk}_{+}(X)
\le4.\]
\end{proof}
Using the preceding lemma, we now derive the following recurrence relation.
\begin{theorem}
Let $x_1, \ldots, x_n$ be real numbers, and let $r > 0$ satisfy
\begin{equation}\label{eq:aboveRationals}
\min_{1 \le p \le n} \, r x_p^{2} \;\ge\; \max_{1 \le q \le n} x_q^{2}.  
\end{equation}

Then
\[
\operatorname{st}_{+}\!\big(A(x_1, \ldots, x_n, r x_1, \ldots, r x_n)\big)
\leq 
\operatorname{st}_{+}\!\big(A(x_1, \ldots, x_n)\big) + 4.
\]
\end{theorem}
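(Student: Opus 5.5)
The plan is to split $M:=A(x_1,\ldots,x_n,rx_1,\ldots,rx_n)$ into two symmetric nonnegative pieces. The first piece will be a rank-one Kronecker multiple of $A(x_1,\ldots,x_n)$. The second will be an off-diagonal block matrix of the type in Lemma~\ref{lemma:XX^{T}}. The bound then follows from subadditivity, Proposition~\ref{prel:proposition2.2}(a).

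\textbf{Block structure.} Write $A:=A(x_1,\ldots,x_n)$ and $\alpha=\{1,\ldots,n\}$, $\bar\alpha=\{n+1,\ldots,2n\}$. Then $M_{\alpha}=A$ and $M_{\bar\alpha}=A(rx_1,\ldots,rx_n)=r^2A$. The off-diagonal block is $Y:=M_{\alpha\bar\alpha}$ with $Y_{ij}=(x_i-rx_j)^2$.

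\textbf{The key identity.} A direct expansion gives
\[
(x_i-rx_j)^2-r(x_i-x_j)^2=(1-r)x_i^2+(r^2-r)x_j^2=(r-1)\bigl(rx_j^2-x_i^2\bigr).
\]
Setting $Z:=\bigl((r-1)(rx_j^2-x_i^2)\bigr)_{i,j}$, we get $Y=rA+Z$. Hence
\[
M=\begin{pmatrix}1 & r\\ r & r^2\end{pmatrix}\otimes A+\begin{pmatrix}0 & Z\\ Z^{T} & 0\end{pmatrix}.
\]

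\textbf{Nonnegativity of $Z$.} Hypothesis \eqref{eq:aboveRationals} gives $rx_j^2\ge x_i^2$ for all $i,j$. It also forces $r\ge1$: if all $x_i=0$ the statement is trivial, and otherwise $r\min_p x_p^2\ge\max_q x_q^2\ge\min_p x_p^2$ with $\min_p x_p^2>0$. Strictly, a zero value $x_p=0$ together with a nonzero $x_q$ would violate \eqref{eq:aboveRationals}, so either all $x_i$ vanish or $\min_p x_p^2>0$. Thus $Z\ge0$, and both summands lie in $\mathcal S_{2n}^{+}$.

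\textbf{Bounding the two summands.} The first summand has
\[
\operatorname{st}_{+}\le \operatorname{st}_{+}\!\begin{pmatrix}1&r\\r&r^2\end{pmatrix}\operatorname{st}_{+}(A)=\operatorname{st}_{+}(A)
\]
by Theorem~\ref{lemma:kroneckerproduct}, since the $2\times2$ factor has rank one. For the second summand, a simultaneous permutation of blocks does not change $\operatorname{st}_{+}$ (Proposition~\ref{proposition:principal_rearrangement/diagonal_scaling}(a)), so it suffices to bound $\begin{pmatrix}0&Z^T\\ Z&0\end{pmatrix}$. Its block $Z^T$ has $(i,j)$ entry $(r-1)rx_i^2-(r-1)x_j^2$. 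Apply Lemma~\ref{lemma:XX^{T}} with first parameters $a_i:=(r-1)rx_i^2\ge0$, so that $|a_i|=a_i$, and $y_j:=(r-1)x_j^2$. The lemma's hypothesis $\min_p a_p\ge\max_q y_q$ is exactly \eqref{eq:aboveRationals} multiplied by $r-1\ge0$. This gives $\operatorname{st}_{+}\le4$ for the second summand, and the case $r=1$ (where $Z=0$) is trivially included.

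\textbf{Conclusion and main obstacle.} Adding the two bounds via Proposition~\ref{prel:proposition2.2}(a) yields $\operatorname{st}_{+}(M)\le\operatorname{st}_{+}(A)+4$. The only real obstacle is finding the coefficient $r$ in $Y=rA+Z$ that makes $Z$ both nonnegative and of the shape $|x_i|-y_j$ required by Lemma~\ref{lemma:XX^{T}}. The coefficient $r$ is forced by matching the diagonal blocks to the rank-one Kronecker form $(1,r)^T(1,r)\otimes A$. Once that is fixed, the rest is routine verification.
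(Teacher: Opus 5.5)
Your proposal is correct and follows essentially the same route as the paper. It uses the same identity $(x_i-rx_j)^2=r(x_i-x_j)^2+(r-1)(rx_j^2-x_i^2)$, the same splitting into $\begin{pmatrix}1&r\\r&r^2\end{pmatrix}\otimes A(x_1,\ldots,x_n)$ plus an off-diagonal block matrix bounded by Lemma~\ref{lemma:XX^{T}}, and then Theorem~\ref{lemma:kroneckerproduct} with subadditivity. You are in fact slightly more careful than the paper: you absorb the factor $r-1$ into the lemma's parameters, and you use a block permutation to put the off-diagonal block in the orientation $(|a_i|-y_j)$ that the lemma actually requires.
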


\begin{proof}
If $x_i=0$ for some $i \in \{1, \ldots,n\}$, then by \ref{eq:aboveRationals} all $x_i=0$ and the assertion follows trivially. Let $x_i \neq 0$ for all $i$.
Let $\alpha = \{1, \ldots, n\}$ and $\bar{\alpha} = \{n+1, \ldots, 2n\}$ and set $A:=A(x_1, \ldots, x_n, r x_1, \ldots, r x_n)$. Then
\[
A(x_1, \ldots, x_n, r x_1, \ldots, r x_n) =
\begin{pmatrix}
A_{\alpha} & A_{\alpha\bar{\alpha}} \\[4pt]
A_{\bar{\alpha}\alpha} & A_{\bar{\alpha}}
\end{pmatrix}
\in \mathcal{S}_{2n}^{+}.
\]
Note that $A_{\alpha}= A(x_1, \ldots, x_n)$ and $A_{\bar{\alpha}}= r^{2}A_{\alpha}$. We next consider the off-diagonal block $A_{\alpha\bar{\alpha}}$. For $i \in \alpha$ and $j \in \bar{\alpha}$,
\[
(A_{\alpha\bar{\alpha}})_{ij} = (x_i - r x_j)^2 = r (x_i - x_j)^{2} + (r-1)\,(r x_j^{2} - x_i^{2}).
\]
Define $X:=(r x_j^{2} - x_i^{2}) \in \mathbb{R}^{n \times n}$. Then
\[
A_{\alpha\bar{\alpha}}
= r A_{\alpha} + (r-1) X.
\]
By hypothesis, $\min_{1 \le p \le n} r x_p^{2} \;\ge\; \max_{1 \le q \le n} x_q^{2}$. Thus, $X$ is a nonnegative matrix and $r\geq 1$. Since $A$ is symmetric,

\begin{align}\label{equation:JS-matrix_kroneckerproduct}
A(x_1, \ldots, x_n, r x_1, \ldots, r x_n)
&=
\begin{pmatrix}
A_{\alpha} & rA_{\alpha} + (r-1)X \\[4pt]
rA_{\alpha} + (r-1)X^{T} & r^{2}A_{\alpha}
\end{pmatrix} \\[3pt]
&=
\begin{pmatrix}
1 & r\\
r & r^2
\end{pmatrix}
\otimes A_{\alpha}
+
(r-1)\,\begin{pmatrix}
0 & X\\
X^{T} & 0
\end{pmatrix} \nonumber.
\end{align}
Note that $\operatorname{st}_{+}
\!\left(
\begin{pmatrix}
1 & r\\
r & r^2
\end{pmatrix}
\right)
=1$ and by Lemma~\ref{lemma:XX^{T}}, $\operatorname{st}_{+}
\!\left(
\begin{pmatrix}
0 & X\\
X^{T} & 0
\end{pmatrix}
\right)
\leq 4.$ Applying Proposition~\ref{prel:proposition2.2} and Theorem~\ref{lemma:kroneckerproduct} to \eqref{equation:JS-matrix_kroneckerproduct}, we have
\[
\operatorname{st}_{+}\!\big(A(x_1, \ldots, x_n, r x_1, \ldots, r x_n)\big)
\leq 
\operatorname{st}_{+}\!\big(A(x_1, \ldots, x_n)\big) + 4.
\]
\end{proof}
We conclude this section by establishing a lower bound on the SNT-rank of Euclidean distance matrices with pairwise distinct parameters. In fact, we prove a more general result by establishing a lower bound for the SNT-rank of all symmetric nonnegative matrices with zero diagonal and positive off-diagonal entries. 

\begin{proposition}\label{proposition:snt_rk=4}
	Let $A \in \mathcal{S}_n^{+}$ be a matrix with zero diagonal and positive off-diagonal entries. Then
	\[
	\operatorname{st}_{+}(A) 
	\geq 
	\min \left\{ k : n \leq \binom{k}{\lfloor k/2 \rfloor} \right\}.
	\]
	In particular, every matrix $A \in \mathcal{S}_4^{+}$ with zero diagonal and positive off-diagonal entries satisfies
	\[
	\operatorname{rk}_{+}(A)=\operatorname{st}_{+}(A)=4.
	\]
\end{proposition}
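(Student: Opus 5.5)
Since $\operatorname{rk}_{+}(A)\le\operatorname{st}_{+}(A)$ by Proposition~\ref{prel:proposition2.1}(b), it suffices to lower-bound $\operatorname{rk}_{+}(A)$ using only the zero/nonzero pattern of $A$, and then invoke Sperner's theorem. Let $k:=\operatorname{rk}_{+}(A)$ and fix a nonnegative factorization $A=UV^{T}$ with $U,V\in\mathbb{R}_{+}^{n\times k}$. For each $i$, define supports
\[
S_i:=\{\ell: u_{i\ell}>0\},\qquad T_i:=\{\ell: v_{i\ell}>0\}.
\]
Because everything is nonnegative, $a_{ij}=\sum_\ell u_{i\ell}v_{j\ell}>0$ if and only if $S_i\cap T_j\neq\emptyset$. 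The hypotheses therefore say that
\[
S_i\cap T_i=\emptyset \ \text{for all } i,\qquad S_i\cap T_j\neq\emptyset \ \text{for all } i\ne j.
\]
This is exactly the setting of a (skew) Bollobás-type set-pair system. Its simplest consequence is that the $S_i$ form an antichain: if $S_i\subseteq S_j$ with $i\neq j$, then $S_i\cap T_j\subseteq S_j\cap T_j=\emptyset$, contradicting $a_{ij}>0$. In particular the $S_i$ are pairwise distinct, with $S_i\neq\emptyset$ because $a_{ij}>0$ for some $j$ when $n\ge2$.

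Sperner's theorem now gives $n\le\binom{k}{\lfloor k/2\rfloor}$. Hence $\operatorname{st}_{+}(A)\ge\operatorname{rk}_{+}(A)=k\ge\min\{k:n\le\binom{k}{\lfloor k/2\rfloor}\}$. The degenerate case $n=1$ is trivial, since there $A=0$ and the minimum is $0$.

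For the ``in particular'' statement with $n=4$, we compute $\binom{3}{1}=3<4$ and $\binom{4}{2}=6\ge4$, so the minimum equals $4$. This gives $\operatorname{rk}_{+}(A)\ge4$. Conversely, $\operatorname{rk}_{+}(A)\le\operatorname{st}_{+}(A)\le4$ by Proposition~\ref{prel:proposition2.1}, so both equal $4$. The only real content is the translation of the zero pattern into the set-system condition together with the antichain observation. I do not expect a serious obstacle; the main care needed is the equivalence between positivity of $a_{ij}$ and intersection of supports, which relies on nonnegativity of the factors.
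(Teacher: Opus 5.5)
Your proof is correct and has the same structure as the paper's: reduce to $\operatorname{rk}_{+}(A)$ via $\operatorname{rk}_{+}(A)\le\operatorname{st}_{+}(A)$, then compute the minimum for $n=4$ and conclude with $\operatorname{st}_{+}(A)\le n$. The only difference is that the paper cites Beasley--Laffey \cite[Proposition~1.3]{beasley2009realrank} for the lower bound on $\operatorname{rk}_{+}(A)$, whereas you prove it directly via the antichain of supports $\{S_i\}$ and Sperner's theorem, which makes the argument self-contained.
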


\begin{proof}
	Let $A \in \mathcal{S}_n^{+}$ be a symmetric nonnegative matrix with zero diagonal and positive off-diagonal entries. By \cite[Proposition 1.3]{beasley2009realrank},
	\[
	\operatorname{rk}_{+}(A)
	\geq
	\min \left\{
	k : n \leq \binom{k}{\lfloor k/2 \rfloor}
	\right\}.
	\]
	Since $\operatorname{rk}_{+}(A)\leq \operatorname{st}_{+}(A),$ it follows that
	\[
	\operatorname{st}_{+}(A)
	\geq
	\min \left\{
	k : n \leq \binom{k}{\lfloor k/2 \rfloor}
	\right\}.
	\]
	\medskip
	
	To prove the second assertion, assume that $A \in \mathcal{S}_4^{+}$ with zero diagonal and positive off-diagonal entries. Since $\min \left\{
	k : 4 \leq \binom{k}{\lfloor k/2 \rfloor}
	\right\}=4,$ $\operatorname{st}_{+}(A)\geq \operatorname{rk}_{+}(A) \geq  4. $
	By Proposition~\ref{prel:proposition2.1}, we have $\operatorname{rk}_{+}(A)=\operatorname{st}_{+}(A)=4.$
\end{proof}

\begin{example}\label{ex:rk=3<st=4}
	Let  $	B=
	\begin{pmatrix}
		0 & 1 & 1 & b\\
		1 & 0 & \frac{4}{b} & 1\\
		1 & \frac{4}{b} & 0 & 1\\
		b & 1 & 1 & 0
	\end{pmatrix},$ where
	 $b>0$.  By Proposition~\ref{proposition:snt_rk=4}, $\operatorname{st}_{+}(B)=4$. Moreover, $ \operatorname{rk}(B)=3$. Therefore, the matrix $A= B \oplus I_{n-4}\in \mathbb{R}^{n \times n}$ provides another example demonstrating that the equality in Corollary~\ref{cor:rk=snt<=3} does not hold in general.
\end{example}
\vspace*{-0.2cm}
\section*{\textbf{Acknowledgement}}
P.N. Choudhury thanks Helena Šmigoc for introducing him to this topic. B.P. Chauhan was supported by the IIT Gandhinagar Post-Doctoral Fellowship IP/IP/50020 (Indian Institute of Technology Gandhinagar, India). P.N. Choudhury\ was partially supported by
ANRF Prime Minister Early Career Research Grant ANRF/ECRG/2024/002674/PMS (ANRF, Govt.~of India), INSPIRE Faculty Fellowship Research Grant DST/INSPIRE/04/2021/002620 (DST, Govt.~of India).


\begin{thebibliography}{99}

\bibitem{arora2013practicalalgorithm} S. Arora, R. Ge, Y. Halpern, D. Mimno, A. Moitra, D. Sontag, Y. Wu, and M. Zhu, 
A practical algorithm for topic modeling with provable guarantees, \emph{Proceedings of the 30th International Conference on Machine Learning}, 
28:280--288, 2013.


\bibitem{beasley2009realrank} L. B. Beasley and T. J. Laffey, 
Real rank versus nonnegative rank, 
\emph{Linear Algebra Appl.}, 
431(12):2330--2335, 2009.

\bibitem{Dagstuhl report 13082} L. Beasley, T. Lee, H. Klauck and D. O. Theis, Dagstuhl report 13082: communication complexity, linear
optimization, and lower bounds for the nonnegative rank of matrices, \emph{arXiv:1305.4147}, 
2013.

\bibitem{berman2003completely} A. Berman and N. Shaked-Monderer, 
Completely positive matrices, 
\emph{World Scientific}, 
2003.

\bibitem{bukovvsek2023symmetric} D. K. Bukovšek and H. Šmigoc, 
Symmetric nonnegative matrix trifactorization, 
\emph{Linear Algebra Appl.}, 
665:36--60, 2023.

\bibitem{bukovvsek2025pattern} D. K. Bukovšek and H. Šmigoc, 
Symmetric nonnegative trifactorization of pattern matrices,
\emph{Linear Algebra Appl.},
721, 310--338, 2025.


\bibitem{cohen1993nonnegative} J. E. Cohen and U. G. Rothblum, 
Nonnegative ranks, decompositions, and factorizations of nonnegative matrices, 
\emph{Linear Algebra Appl.}, 
190:149--168, 1993.



\bibitem{fu2018anchor} X. Fu, K. Huang, N.D. Sidiropoulos, Q. Shi, and M. Hong, Anchor-free correlated topic modeling, 
\emph{IEEE Trans. Pattern Anal. Mach. Intell.}, 
41(5):1056--1071, 2019.


\bibitem{gillis2012geometric} N. Gillis and F. Glineur, 
On the geometric interpretation of the nonnegative rank, 
\emph{Linear Algebra Appl.}, 
437(11):2685--2712, 2012.

\bibitem{gillis2020siam} N. Gillis, 
Nonnegative matrix factorization, 
\emph{Society for Industrial and Applied Mathematics}, 
2020.

\bibitem{ho2008thesis} N. D. Ho, 
Nonnegative matrix factorizationalgorithms and applications, 
PhD thesis, Catholic University of Louvain, Louvain-la-Neuve, Belgium, 
2008.


\bibitem{lee1999learning} D. D. Lee and H. S. Seung, 
Learning the parts of objects by non-negative matrix factorization, 
\emph{Nature}, 
401:788--791, 1999.

\bibitem{mmlin2010edm} M. M. Lin, M. T. Chu, 
On the nonnegative rank of Euclidean distance matrices, 
\emph{Linear Algebra Appl.}, 
433(3):681--689, 2010.

\bibitem{shitov2025further} Y. Shitov,
Further restricted versions of the nonnegative matrix factorization problem,
\emph{Linear Algebra Appl.}, 
710:267--272, 2025.

\bibitem{vandaele2016heuristics} A. Vandaele, N. Gillis, F. Glineur, and D. Tuyttens, 
Heuristics for exact nonnegative matrix factorization, 
\emph{J. Global Optim.},
65(2):369--400, 2016.

\end{thebibliography}
\end{document}